\renewcommand{\epsilon}{\varepsilon}
\newcommand{\newsection}[1] 
{\subsection{#1}\setcounter{theorem}{0} \setcounter{equation}{0} 
\par\noindent}
\newtheorem{theorem}{Theorem}
\newtheorem{lemma}[theorem]{Lemma}
\newtheorem{corr}[theorem]{Corollary}
\newtheorem{proposition}[theorem]{Proposition}
\newtheorem{deff}[theorem]{Definition}
\newcommand{\bth}{\begin{theorem}}
\newcommand{\ble}{\begin{lemma}}
\newcommand{\bcor}{\begin{corr}}
\newcommand{\bdeff}{\begin{deff}}
\newcommand{\bprop}{\begin{proposition}}
\newcommand{\ele}{\end{lemma}}
\newcommand{\ecor}{\end{corr}}
\newcommand{\edeff}{\end{deff}}
\newcommand{\eprop}{\end{proposition}}
\newcommand{\la}{\lambda}
\newcommand{\e}{\varepsilon}
\newcommand{\supp}{\text{supp }}
\renewcommand{\Pi}{\varPi}
\renewcommand{\epsilon}{\varepsilon}
\newcommand{\R}{{\mathbb R}}
\begin{document}

\title[Strichartz estimates for the Schr\"odinger equation on Zoll manifolds]
{Strichartz estimates for the Schr\"odinger equation on Zoll manifolds.}
\thanks{The first author was supported in part by the Simons Foundation. The second author was  supported in part by the NSF (DMS-2348996).}
\author{Xiaoqi Huang}
\author{Christopher D. Sogge}
\address{XH: Department of Mathematics, Lousiana State University, Baton Rouge, LA 70803}
\address{CDS: Department of Mathematics,  Johns Hopkins University,
Baltimore, MD 21218}

\begin{abstract}
We obtain optimal space-time estimates  in $L^q_{t,x}$ spaces for all $q\ge 2$ for solutions to the Schrödinger equation on Zoll manifolds, including, in particular, the standard round sphere $S^d$. The proof relies on the arithmetic properties of the
spectrum of the Laplacian on Zoll manifolds, as well as bilinear oscillatory integral estimates, which allow us to relate the problem to Strichartz estimate on one-dimensional tori.
\end{abstract}

\maketitle

\newsection{Introduction
}

Let  $(M,g)$ be a compact Riemannian manifold of
dimension $d\ge2$, $\Delta_g$ denotes
the associated Laplace-Beltrami operator and
\begin{equation}\label{00.1}
u(x,t)=\bigl(e^{-it\Delta_g}f\bigr)(x)
\end{equation}
be the solution of the Schr\"odinger equation
on $M\times \R$,
\begin{equation}\label{00.2}
i \partial_tu(x,t)=\Delta_gu(x,t), \quad u(x,0)=f(x).
\end{equation}

Recall that the universal estimates
of Burq, G\'erard and Tzvetkov~\cite{bgtmanifold} says that if $u$
is the solution of the Schr\"odinger equation \eqref{00.2},
then one has the mixed-norm Strichartz estimates
\begin{equation}\label{00.3'}
\|u\|_{L^p_tL^q_x(M\times [0,1])}
\lesssim \|f\|_{H^{1/p}(M)}
\end{equation}
for all {\em admissible} pairs $(p,q)$.  By the latter
we mean, as in Keel and Tao~\cite{KT},
\begin{equation}\label{00.4}
d(\tfrac12-\tfrac1q)=\tfrac2p\, \, \,
\text{and } \, \, 2\le q\le \tfrac{2d}{d-2} \, \,
\text{if } \, d\ge 3, \, \, \, 
\text{or } \, 2\le q<\infty \, \, 
\text{if } \, \, d=2.
\end{equation}
Also, in \eqref{00.3'} the mixed norm is defined by 
\begin{equation}
    \|u\|_{L^p_tL^q_x(M\times [0,1])}=\bigr(\, \int_0^1 \, \|u(\, \cdot\, , t)\|_{L^q_x(M)}^p \, dt\,\bigr)^{1/p},
\end{equation}
and $H^\mu$ denotes the
standard Sobolev space
\begin{equation}\label{00.5'}
\|f\|_{H^\mu(M)}=
\bigl\| \, (I+P)^\mu f\, \bigr\|_{L^2(M)}, 
\quad \text{with } \, \, P=\sqrt{-\Delta_g},
\end{equation}
and ``$\lesssim$'' in \eqref{00.3'} and, in what
follows, denotes an inequality with an implicit,
but unstated, constant $C$ which can change at each occurrence. 

By taking the initial data $f$ to be  eigenfunctions of $P$, it is not hard to see that one can not replace the interval $[0,1]$ in \eqref{00.3'} by $[0, T]$ for an arbitrary large constant $T$, without the implicit constant depending on $T$. Also, for the endpoint Strichartz estimates where
$p=2$ and $q=\tfrac{2d}{d-2}$ with $d\ge3$, the $\frac12$ derivative loss in the right side of \eqref{00.3'} is sharp on $S^d$  by letting $f$ to be the zonal eigenfunctions. See, e.g., \cite{blair2023strichartz,bgtmanifold} for more details. 

In this paper, we consider the analog of \eqref{00.3'} in the case $p=q$, and obtain the following sharp estimate on all Zoll manifolds, i.e., manifolds all of whose geodesics are closed with a common minimal period.

\begin{theorem}\label{main}
Let $(M, g)$ be a smooth Zoll manifold of dimension $d\ge 2$. Then for all $q\ge 2$,
the solution $u$ of \eqref{00.2} satisfies 
\begin{equation}\label{Sn011}
\|u\|_{L^q_{t,x}(M\times [0,1] )}\le C \|f\|_{H^s(M)}, \,\,\,s>\mu(q),
\end{equation}
where $\mu(q)=\max\{\frac{d-1}{2}(\frac12-\frac1q), \frac{d-1}{2}-\frac{d}{q}, \frac d2-\frac{d+2}q\}$.
\end{theorem}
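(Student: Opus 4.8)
The plan is to reduce the Schrödinger equation on a Zoll manifold to a one-dimensional problem, exploiting the clustering of the spectrum. After rescaling the metric so that the common period of geodesics is $2\pi$, the theorem of Colin de Verdière and others tells us that the eigenvalues of $P=\sqrt{-\Delta_g}$ lie in bands of width $O(k^{-1})$ centered at the points $\tfrac12\sqrt{(k+\tfrac{\alpha}{4})^2}\sim k + \tfrac{\alpha}{4}$ for integers $k$, for a fixed constant $\alpha$. Consequently, on the $k$-th band the Schrödinger propagator $e^{-it\Delta_g}=e^{itP^2}$ acts, up to a lower-order error coming from the band width $O(k^{-1})$ over the time interval $[0,1]$, like multiplication by the scalar phase $e^{it(k+\alpha/4)^2}$. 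Summing over $k$, the solution $u(x,t)$ is comparable (after absorbing the band-width errors into a harmless operator) to $\sum_k e^{it(k+\alpha/4)^2}\,\chi(P/k)\cdot(\text{spectral cluster of }f)$, which has the structure of a solution of the Schrödinger equation on the circle $\mathbb{T}$ in the $t$-variable, with the spatial structure of $M$ entering only through the individual spectral clusters $\chi_k f$.

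The next step is to bound the $L^q_{t,x}$ norm by combining the one-dimensional torus Strichartz/Bourgain-type estimates in $t$ with the spectral cluster ($L^q$ eigenfunction) estimates of Sogge on $M$ in $x$. Concretely, I would first handle the case $q$ even, where one can expand $|u|^q$ and use orthogonality in $t$ (the arithmetic of the squares $(k+\alpha/4)^2$, i.e. the circle-method / divisor bounds behind the Strichartz estimates on $\mathbb{T}$) to reduce matters to counting solutions of the relevant Diophantine equations, while in $x$ one inserts the $L^q(M)$ bounds $\|\chi_k f\|_{L^q}\lesssim k^{\sigma(q)}\|\chi_k f\|_{L^2}$ with the Sogge exponent $\sigma(q)$. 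The three terms in $\mu(q)$ should correspond respectively to: the regime $2\le q\le \tfrac{2(d+1)}{d-1}$ where the $x$-estimate is the $L^2$-based cluster bound and the time factor is cheap; the regime $\tfrac{2(d+1)}{d-1}\le q\le \tfrac{2(d+3)}{d+1}$ (a transitional range); and the large-$q$ regime $q\ge \tfrac{2(d+3)}{d+1}$ where the $L^\infty$-type cluster bound $k^{(d-1)/2}$ and the full strength of the one-dimensional $L^q(\mathbb{T})$ Strichartz bound combine. General $q$ is then obtained by interpolation with the trivial $L^2_{t,x}$ estimate and a $T T^*$/duality argument, together with an $\epsilon$-loss to pass from the critical exponents $s=\mu(q)$ to $s>\mu(q)$; the $\epsilon$ is exactly where the logarithmic divisor losses on $\mathbb{T}$ and the summation over $k$ get absorbed.

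The heart of the argument, and the place where the hypothesis that $(M,g)$ is Zoll (rather than merely having the band structure) is used in a quantitative way, will be the bilinear oscillatory integral estimates. To get the sharp exponent for $q$ strictly between the even integers — in particular near the most delicate exponent $q=\tfrac{2(d+3)}{d+1}$ where the three-way maximum is attained — one cannot simply interpolate the even-integer bounds; instead one decomposes $\chi_k f$ into pieces microlocalized in $\xi$ near individual closed geodesics (``tube'' or ``Knapp'' pieces of angular width $k^{-1/2}$), and the product of two such pieces is controlled by a bilinear estimate whose gain over the trivial bound is dictated by the transversality of the corresponding geodesics. This is where the fine structure of the Zoll flow enters. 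I expect this bilinear step to be the main obstacle: one must prove a bilinear oscillatory integral inequality that is uniform over the $O(k)$ closed geodesics, handle the almost-parallel (small-transversality) interactions separately — these are governed by the one-dimensional torus estimate along a single geodesic, which is precisely why the problem ``relates to Strichartz estimates on one-dimensional tori'' as advertised in the abstract — and then reassemble the pieces by an orthogonality/almost-orthogonality argument that loses at most $k^\epsilon$.
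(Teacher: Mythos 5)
Your high-level plan matches the paper's strategy in its broad strokes: rescale so the geodesics have period $2\pi$, use the Weinstein/Colin de Verdi\`ere band structure to replace $e^{-it\Delta_g}$ by $\sum_k e^{it(k+\alpha/4)^2}\chi_k$, couple one-dimensional Bourgain-type counting in $t$ with Sogge's spectral cluster bounds in $x$, and upgrade the endpoint cases via a microlocal ``tube'' decomposition and bilinear oscillatory integral estimates with transversality gains. You also correctly recognize that interpolating the even-integer estimates does not suffice, which is exactly the obstruction the paper confronts.

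However, some structural details are off, and they matter. First, your proposed breakpoint $q=\tfrac{2(d+3)}{d+1}$ is not correct: the actual transitions are $q=\tfrac{2(d+1)}{d-1}$ (where $s_{\mathrm{sm}}=s_{\mathrm{lg}}$) and $q=4$ (where $s_{\mathrm{lg}}=s_{\mathrm{Sob}}$) for $d\ge3$, and for $d=2$ the single transition is at $q=14/3$; there is no exponent at which all three pieces of $\mu(q)$ coincide. Second, the paper's key efficiency device, which you do not identify, is a mixed-norm estimate $\|e^{-it\Delta_g}\beta(P/\la)f\|_{L^q_x L^4_t}\lesssim_\e \la^{\sigma(q)+\e}\|f\|_2$ (Lemma~2.1), proved by the $t$-Plancherel$/$divisor argument you sketch but with the $L^q_x$ exponent decoupled from the $L^4_t$ exponent. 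Combined with Bernstein in $t$, this alone already yields the theorem for all $d\ge3$ and for $d=2$, $q\notin(4,6)$; no bilinear machinery is needed there. The genuinely hard case is only $d=2$, $q\in(4,6)$, reduced (by interpolating with the $q=4$ and $q=6$ endpoints) to the single exponent $q=14/3$. There the paper does not merely use a bilinear $L^2$ estimate and almost-orthogonality: it proves an $L^6$ estimate for each microlocal piece $P_{k,\mu}$ via a refined Bourgain-type counting bound $\#S_{\ell_1,\ell_2}\lesssim_\e\la^{1+\e}2^{2j}$ (where the extra constraint on $k_1+k_2+k_3$ comes from the near-flatness of $d_g(x,y)$ along the tube direction), and then interpolates in $L^p$ between that $L^3$ bilinear estimate and the Lee-type $L^2$ bilinear estimate to land on $L^{7/3}=L^{14/3}/2$. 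Without identifying this interplay between the tube localization and the Diophantine counting, the ``reassemble with an $\e$-loss'' step in your plan would not produce the exponent $1/7$.
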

%We will show in the last section that the estimate \eqref{Sn011} is sharp in terms of the bounds on $s$.
%By choosing $f$ to be the kernels of certain dyadic multiplier operators, one can show that $\mu(q)\ge \frac d2-\frac{d+2}q$ on any compact manifolds. The functions that saturate the bounds given by the first two exponents in $\mu(q)$ are closely related to the examples that give lower bounds on $L^q$ norm of eigenfunctions.

Note that when $d=2$, \eqref{Sn011} splits into estimates for two ranges of $q$:
\begin{equation}\label{d2est}
\|u\|_{L^q_{t,x}(M\times [0,1])}\lesssim_\e
\begin{cases}
\|f\|_{H^{s_{\mathrm {sm}}+\e}(M)},  \quad s_{\mathrm {sm}}=\tfrac12(\tfrac12-\tfrac1q), \, \, q\in (2,\tfrac{14}3 ]
\\
\|f\|_{H^{s_{\mathrm {Sob}}+\e}(M)},  \quad s_{\mathrm {Sob}}=1-\tfrac4q,  \, \, q\in [\tfrac{14}3, \infty],
\end{cases}
\end{equation}
while, in higher dimensions $d\ge3$, there are three ranges of $q$:
\begin{equation}\label{d3est}
\|u\|_{L^q_{t,x}(M\times [0,1])}\lesssim_\e
\begin{cases}
\|f\|_{H^{s_{\mathrm {sm}}+\e}(M)},  \quad s_{\mathrm {sm}}=\tfrac{d-1}2(\tfrac12-\tfrac1q), \, \, q\in (2,\tfrac{2(d+1)}{d-1} ]
\\
\|f\|_{H^{s_{\mathrm {lg}}+\e}(M)},  \quad s_{\mathrm {lg}}=\tfrac{d-1}2-\tfrac{d}q, \, \, q\in  [\tfrac{2(d+1)}{d-1},4 ]
\\
\|f\|_{H^{s_{\mathrm {Sob}}+\e}(M)},  \quad s_{\mathrm {Sob}}=\frac d2-\frac{d+2}q,  \, \, q\in [4, \infty].
\end{cases}
\end{equation}

As we shall see, the estimates involving $s_{\mathrm{sm}}$ and $s_{\mathrm{lg}}$ are saturated by eigenfunctions,
with these two exponents being the ones
occurring in the eigenfunction $L^q$-estimates
in   \cite{sogge86} and \cite{sogge881} involving relatively small and large exponents $q$.  The estimates involving
$s_{\mathrm {Sob}}$, as we shall see, are optimal due 
to standard  functions saturating dyadic $L^2_x \to L^q_x$ Sobolev
estimates.

Let us also briefly mention the reason that there are only two estimates when $d=2$ and must be three for $d\ge3$.  We first note
that, when $d=2$, we have $s_{\mathrm{sm} }\ge s_{\mathrm {Sob}} \iff q\in (2,\tfrac{14}3]$.  Also,
$s_{\mathrm{lg}}\ge s_{\mathrm{Sob}}\iff q\le 4$ and $s_{\mathrm{lg}}\ge s_{\mathrm {sm}}\iff q\ge 6$.  Since $4\in (2,\tfrac{14}3]$, we
conclude that when $d=2$ we simply have $\max(s_{\mathrm{sm}}, s_{\mathrm{lg}}, s_{\mathrm{Sob}})=\max(s_{\mathrm{sm}},s_{\mathrm{Sob}})$.
On the other hand, in higher dimensions $d\ge3$, we have $4\notin (2,\tfrac{2(d+1)}{d-1}]$, and
$s_{\mathrm{Sob}}\ge  \max(s_{\mathrm{sm}}, s_{\mathrm{lg}})\iff q\in [4,\infty]$ and also $s_{\mathrm{sm}}\ge s_{\mathrm{lg}}\iff q\in (2,\tfrac{2(d+1)}{d-1}]$.
This accounts for why, unlike for the $d=2$ case, there must be three ranges of exponents in higher dimensions.  We also should point out (see e.g. \cite{sogge86})
that, on $S^d$ zonal eigenfunctions, $Z_\la$, have $L^q$-norms which are $\approx \la^{s_{\mathrm{lg}}}$ for $q\in [\tfrac{2(d+1)}{d-1},\infty]$, and thus,
if we take $f=Z_\la$ we immediately see that the estimates in \eqref{d3est} must be sharp (up to the arbitrary $\e>0$) for 
$q\in [\tfrac{2(d+1)}{d-1},4]$, while we note that such data do not saturate the $d=2$ bounds in \eqref{d2est} for any
exponent $q\in (2,\infty]$.

If $M=S^1$, \eqref{Sn011} holds without any derivative loss for $q=4$ due to a result of Zygmund \cite{zygmund1974fourier}. This was later generalized to $q=6$ with an arbitrary small loss of derivative by Bourgain~\cite{bourgain1993fourier}. For $d\ge 2$,
in the special case $q=4$, \eqref{Sn011} was established in \cite[Theorem 4]{bgtmanifold}. This result was generalized to $q>4$ for $d=3$ by Herr \cite{herr2013quintic}. In particular, for $q>4$ and $d=3$ it was shown in \cite{herr2013quintic} that  \eqref{Sn011} holds with $s=\frac d2-\frac{d+2}q$ using a result of Bourgain \cite{bourgain1989lambda} based on the circle method of Hardy and Littlewood. The method employed in \cite{herr2013quintic} also implies that \eqref{Sn011} hold with  $s=\frac d2-\frac{d+2}q$ for all $q>4$, $d\ge 3$, and $q\ge 6$  when $d=2$. Therefore,  the main new contribution in Theorem~\ref{main} concerns the case of relatively small  exponents $q$.  In the next section, we will describe the difficulties in proving \eqref{Sn011} for small exponents $q$, particularly when $d=2$ and $q\in (4,6)$. We do not address the critical case
where $s=\mu(q)$ in this paper. 

Note that  for the spheres $S^d$ with $d\ge 2$, if we replace the $H^s$ norm in \eqref{Sn011} with its $L^q$-based analog,
Chen, Duong, Lee and Yan \cite{chen2022sharp} proved the following estimate:
\begin{equation}\label{lpsn}
\|e^{it\Delta_g}f\|_{L^q_{t,x}(S^d\times [0,1] )}\le C \|(1+P)^sf\|_{L^{q}(S^d)}, \,\,\, s>\max\{0, \tfrac{d}{2}-\tfrac{d+2}{q}\},
\end{equation}
for all $q\ge 2$ if $d=2$, and $q\ge 4$ if $d\ge 3$. The authors also showed that the lower bound on $s$ is optimal, using an example similar to one we will discuss in the final section,  along with a  semiclassical dispersion estimate  
of Burq, G\'erard and Tzvetkov~\cite{bgtmanifold}.

The estimate in Theorem~\ref{main} can also be generalized to any compact space forms with positive curvature, whose spectrum is a subset of the eigenvalues of Laplacian on the sphere-for example, real projective spaces $RP^n$ and lens spaces. See also Zhang \cite{zhang2020strichartz} for related results on compact Lie groups.

In the case of flat tori, similar sharp estimate holds with $s>\mu(q)=\max\{\frac{d}{2}-\frac{d+2}{q},0\}$ by using the $\ell_2$ decoupling theorem of Bourgain-Demeter \cite{BoDe}, together with Sobolev estimates. See also Killip and Visan \cite{killip2014scale} for the critical case $s=\mu(q)$ when $q>\frac{2(d+2)}{d}$, as well as the reference therein for a  summary of prior work on Strichartz estimates on square and irrational tori.
%See  \cite{DGG,DGGM}  for related results on the irrational tori. 
We shall also mention that the same result holds in the Euclidean space $\R^d$ globally in time if $q\ge \frac{2(d+2)}{d}$ by using the Euclidean analog of \eqref{00.3'}. 

As in earlier works, the proof of Theorem~\ref{main} relies on the arithmetic properties of the spectrum of the Laplacian
on Zoll manifolds. More explicitly, if the geodesics of $M$ are of period $T$, then it is known that there exist an integer $\alpha\ge 0$ and $A>0$, such that the spectrum of $\sqrt{-\Delta_g}$ is contained in $ \cup _{k=0}^\infty I_k$ where 
\begin{equation}\label{spe}
   I_k=[\frac{2\pi}{T}(k+\frac{\alpha}4)-\frac Ak, \frac{2\pi}{T}(k+\frac{\alpha}4)+\frac Ak].
\end{equation}
 Here $\alpha$  is the so called Maslov index of the closed geodesics, see e.g., \cite{DuistermaatGuillemin,weinstein,zelditch1997fine}. As a special case, when $M$ is the standard round sphere $S^d$,  the eigenvalues of $\sqrt{-\Delta_g}$ are $\sqrt{k(k+d-1)}$ for integers $k\ge 0$. The proof also uses the 
spectral projection bounds of the second author~\cite{sogge881}, which gives rise to the first two exponents 
in the definition 
of  $\mu(q)$ in \eqref{Sn011}. 
Additionally,
the proof of Theorem~\ref{main} also relies on the bilinear oscillatory integral estimates of
H\"ormander~\cite{HormanderFLP} and
 Lee~\cite{LeeBilinear} which is related to 
 %earlier 
 work of Tao, Vargas and Vega \cite{TaoVargasVega}.

The authors would like to thank the referees for their careful reading and valuable suggestions, which improved the exposition.

\newsection{Proof of Theorem~\ref{main}.}

Let us first introduce the Littlewood-Paley cutoff functions, which allow us to reduce matters to certain dyadic estimates.
More explicitly, let us fix a Littlewood-Paley bump function
$\beta$ satisfying
\begin{equation}\label{00.11}
\beta \in C^\infty_0((1/2,2)) \quad
\text{and } \, \, 1=\sum_{m=-\infty}^\infty 
\beta(2^{-m}s), \, \, s>0.
\end{equation}
Then, if we set $\beta_0(s)=1-\sum_{m=1}^\infty
\beta(2^{-m}s)\in C^\infty_0(\R_+)$ and
$\beta_m(s)=\beta(2^{-m}s)$, $m=1,2,\dots$, we
have (see e.g., \cite{SFIO2})
\begin{equation}\label{00.12}
\|h\|_{L^q(M)}\approx
\bigl\| \, (\, \sum_{m=0}^\infty |\beta_m(P)h|^2\, 
)^{1/2} \, \bigr\|_{L^q(M)}, \, \, \,
1<q<\infty.
\end{equation} 
Trivially, $$\|\beta_0(P)e^{-it\Delta_g}\|_{L^2_x(M)
\to L^p_tL^q_x(M\times [0,1] )}=O(1),\,\,\forall\,\, p, q\ge 2,$$
and, similarly, such results
where $m=0$ is replaced by a small fixed $m\in {\mathbb N}$ are
also standard.
So, as noted
in  Burq, G\'erard and Tzvetkov~\cite{bgtmanifold}, one can use \eqref{00.12} and
Minkowski's inequality to see that 
\eqref{Sn011}  follows from  
\begin{equation}\label{Sn01}
\|e^{-it\Delta_{g}}\beta(P/\la)f\|_{L^q_{t,x}(M\times [0,1])}\le C_\e \la^{\mu(q)+\e} \|f\|_{L^2(M)}, \,\,\,\forall \e>0,
\end{equation}
assuming $\la=2^m\gg 1$.

%As discussed in the previous section, by using Littlewood--Paley theory,  it suffices to prove 
% \eqref{Sn01}. We begin with the following lemma:
To prove \eqref{Sn01}, we begin with the following:
\begin{lemma}\label{mainL4}
Let $(M, g)$ be a smooth Zoll manifold of dimension $d\ge 2$, for any $2\le q\le \infty$, we have for $\la=2^m\gg1$
\begin{equation}\label{Sn}
\|e^{-it\Delta_{g}}\beta(P/\la)f\|_{L^{q}_xL^4_t([0, 1]\times M)}\le C_\e \la^{\sigma(q)+\e} \|f\|_{L^2(M)}, \,\,\,\forall \e>0,
\end{equation}
where $\sigma(q)=\max\{\frac{d-1}{2}(\frac12-\frac1q), \frac{d-1}{2}-\frac{d}{q}\}$.
\end{lemma}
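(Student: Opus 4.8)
The plan is to reduce the mixed-norm estimate \eqref{Sn} to a $t$-averaged spectral projection bound, exploiting the arithmetic structure \eqref{spe} of the Zoll spectrum. First I would expand $e^{-it\Delta_g}\beta(P/\la)f$ in the eigenfunction basis: writing $P e_k = \la_k e_k$ with $\la_k$ lying in the cluster $I_k$, we have $e^{-it\Delta_g}\beta(P/\la)f = \sum_k e^{it\la_k^2}\beta(\la_k/\la) E_k f$, where $E_k$ is the spectral projection onto the $k$-th cluster. The key point is that on a Zoll manifold $\la_k^2 = \left(\tfrac{2\pi}{T}\right)^2\left(k+\tfrac{\alpha}{4}\right)^2 + O(1) = c\, k^2 + (\text{lower order}) + O(1)$, so modulo a bounded multiplier (which is harmless in $L^2_x$, hence in $L^q_x L^4_t$ after summing) the time evolution is governed by $e^{i c k^2 t}$, i.e. essentially the Schrödinger flow on $S^1$. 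I would therefore use the almost-orthogonality furnished by the $\Coi$ Littlewood--Paley machinery to freeze $t$-behavior into that of a one-dimensional torus problem.

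Concretely, after this reduction the $L^4_t$ norm of $\sum_k a_k e^{ick^2 t}$ over $t\in[0,1]$ is handled by Zygmund's theorem \cite{zygmund1974fourier} (the $q=4$ torus Strichartz estimate with no loss): $\|\sum_k a_k e^{ick^2 t}\|_{L^4_t([0,1])}\lesssim \|(a_k)\|_{\ell^2_k}$, uniformly in the number of frequencies. Applying this pointwise in $x$ with $a_k = a_k(x) = \beta(\la_k/\la)(E_k f)(x)$ gives
\begin{equation}
\|e^{-it\Delta_g}\beta(P/\la)f\|_{L^q_x L^4_t([0,1]\times M)} \lesssim \Bigl\| \bigl(\sum_{k} |\beta(\la_k/\la) E_k f|^2\bigr)^{1/2}\Bigr\|_{L^q_x(M)}.
\end{equation}
The sum over $k$ is restricted to $\la_k \sim \la$, i.e. to $O(1)$ many clusters (since consecutive cluster centers are spaced by $\tfrac{2\pi}{T}$, there are $\sim \la$ clusters up to frequency $\la$, but $\beta(\la_k/\la)$ localizes to a band of width $\sim \la$ in $\la_k$, hence $\sim\la$ clusters). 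So I cannot simply bound each term; instead I would use the Cauchy--Schwarz / triangle inequality together with the fact that $\sum_{\la_k\sim\la}\beta(\la_k/\la)^2 E_k = \beta(P/\la)^2$ is itself a spectral multiplier of order $0$, so that $(\sum_k |\beta(\la_k/\la)E_k f|^2)^{1/2}$ is, up to acceptable errors, comparable in $L^q_x$ to $\| \beta(P/\la) f\|$ composed appropriately — more precisely I would dominate it by the $L^q_x$ operator norm of the band-localized projection. This is exactly where the spectral projection bounds of Sogge \cite{sogge881} enter: $\|\beta(P/\la)\|_{L^2(M)\to L^q(M)} \lesssim \la^{\sigma(q)}$ with $\sigma(q)=\max\{\tfrac{d-1}{2}(\tfrac12-\tfrac1q),\ \tfrac{d-1}{2}-\tfrac{d}{q}\}$, the two exponents corresponding to the small-$q$ and large-$q$ regimes of the classical eigenfunction estimates. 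The $\e$-loss in \eqref{Sn} absorbs the logarithmic factors coming from summing over the $O(\log\la)$ dyadic sub-blocks or from the almost-orthogonality bookkeeping.

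The main obstacle I anticipate is the passage from the clean torus model to the actual manifold: the cluster centers $\la_k$ are only approximately $ck$, with the $O(1/k)$ error in \eqref{spe} and the Maslov shift $\alpha/4$, so $e^{it\la_k^2}$ is not literally $e^{ick^2 t}$. One must check that the discrepancy $e^{it(\la_k^2 - ck^2 - (\text{linear in }k))}$ contributes a multiplier that is uniformly bounded (and smooth in a suitable sense) over $t\in[0,1]$ and $\la_k\sim\la$, so that Zygmund's torus estimate can still be applied after conjugating by it; the $O(1/k)$ term, being bounded, and the linear-in-$k$ term, being absorbable into a modulation $e^{i\gamma k t}$ that does not affect the $L^4_t$-norm on $[0,1]$ of a lacunary-type square function, are the technical points to verify. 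A secondary subtlety is making the reduction \eqref{Sn01} $\Rightarrow$ \eqref{Sn} (or rather its use): one uses $\|u\|_{L^q_{t,x}} \le \|u\|_{L^q_x L^4_t}$ only when $q\le 4$, so Lemma \ref{mainL4} will later be combined with other estimates (the bilinear oscillatory integral bounds of Hörmander \cite{HormanderFLP} and Lee \cite{LeeBilinear}) to cover $q>4$; within the lemma itself, however, all $2\le q\le\infty$ are treated uniformly by the argument above since it rests only on Zygmund's $L^4_t$ bound and the $L^q_x$ spectral projection estimates.
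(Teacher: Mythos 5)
Your overall strategy coincides with the paper's: pass to an $L^4_t$ almost-orthogonality statement driven by the arithmetic of the Zoll spectrum, then control the resulting square function in $L^q_x$ by Minkowski's inequality and the unit-band spectral projection bounds of \cite{sogge881}. The second half of your argument is sound. The gap is in the first half, exactly at the point you flag as ``the main obstacle'': the reduction to exact phases $e^{ick^2t}$ cannot be accomplished by ``conjugating by a bounded multiplier.'' Each cluster $I_k$ contains many distinct eigenvalues $\la_{k,i}=\tfrac{2\pi}{T}(k+\tfrac\alpha4)+O(1/k)$, so the evolved cluster projection is $\sum_i e^{it\la_{k,i}^2}E_{k,i}f$, and the discrepancies $e^{it(\la_{k,i}^2-c^2(k+\alpha/4)^2)}$ are non-constant functions of $t$ that differ from eigenvalue to eigenvalue \emph{within a single cluster}. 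Hence there is no coefficient $a_k(x)$ for which the solution equals $\sum_k a_k(x)e^{ick^2t}$ up to a removable factor, and the torus estimate cannot be applied pointwise in $x$ as written. A secondary inaccuracy: the $t$-only variant of Zygmund you invoke is not loss-free and not ``uniform in the number of frequencies''; one has $\|\sum_{k\le N}a_ke^{ik^2t}\|_{L^4_t}\lesssim(\max_m r_2(m))^{1/4}\|a\|_{\ell^2}$ only via the divisor-type bound $r_2(m)=O_\e(m^\e)$, which is the arithmetic heart of the lemma rather than a free consequence of \cite{zygmund1974fourier} (whose genuinely loss-free statement is the two-dimensional space-time bound on $\mathbb T^2$).

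The paper repairs precisely this point by never linearizing the phases: after inserting $\rho(t)$ with $\hat\rho$ compactly supported, one observes that the $t$-Fourier transform of each product $\tilde P_kf\cdot\tilde P_\ell f$ is supported in an interval of bounded length about $(k+\tfrac\alpha4)^2+(\ell+\tfrac\alpha4)^2$ (this is where the $O(1/k)$ cluster width enters), and then runs the $L^2_t$ almost-orthogonality through a unit frequency partition $\eta_j(D_t)$ together with the counting bound $\#\{(k,\ell):(k+\tfrac\alpha4)^2+(\ell+\tfrac\alpha4)^2\in[j-C_0-1,j+C_0+1]\}\lesssim_\e\la^\e$. If you replace your ``conjugate and apply Zygmund pointwise in $x$'' step by this Fourier-support-plus-counting argument, the remainder of your proof (Minkowski, then $\|P_k\|_{L^2\to L^q}\lesssim k^{\sigma(q)}$ cluster by cluster rather than for the full dyadic band) goes through and reproduces the lemma.
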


\eqref{Sn} is a generalization of Theorem 4 in \cite{bgtmanifold}, where the special case $q=4$ was handled. As in \cite{bgtmanifold}, the proof of \eqref{Sn} relies on the  arithmetic properties of the eigenvalues of Laplacian on Zoll manifolds.
% {\color{red}{In \cite{sanchez2021sharp} Theorem 1.1, the authors considered general $\|e^{it\Delta_{g}}f_\la\|_{L^{p}_xL^q_t(M\times I)}$  estimates, where $dt$
% integral is inside, but they did not make use of the good $L^4_t$ estimate on $M$, and I think there is a mistake in the proof of their main theorem when they use Sobolev estimates.

% } }

\begin{proof}
To prove \eqref{Sn}, it suffices to show that whenever we fix $\rho\in \mathcal{S}(\R)$ satisfying $\supp\hat\rho\subset (-\frac12, \frac12)$ and $\rho\ge 0$, we have 
\begin{equation}\label{Snsmooth}
\|\rho(t)e^{-it\Delta_{g}}\beta(P/\la)f\|_{L^{q}_xL^4_t(\R\times M)}\le C_\e \la^{\sigma(q)+\e} \|f\|_{L^2(M)}, \,\,\,\forall \e>0.
\end{equation}

By a dilation of the metric
we may assume that the geodesics of $M$ have $2\pi$ as a common period.  By \eqref{spe},  the spectrum of $\sqrt{-\Delta_g}$ is then contained in $ \cup _{k=1}^\infty I_k$ where 
\begin{equation}\label{spe1}
    I_k=[k+\frac{\alpha}4-\frac Ak,  k+\frac{\alpha}4+\frac Ak],
\end{equation} for some constant $A>0$ and integer $\alpha \ge 0$.
Now let ${P}_k$ denote the projection operator such that ${P}_k f=f $ if Spec ${f \in I_k}$.
 Since we are assuming $f=\beta(P/\la)f$ with $\la=2^m\gg 1$, by \eqref{00.11}, it suffices to consider $k\gg 1$. Note that for $k\gg 1$, 
 the intervals
 $I_k$ are disjoint, 
 and so we have 
\begin{equation}\label{2plus2}
    \begin{aligned}
       \Big( \rho(t)&e^{-it\Delta_{g}}\beta(P/\la)f\Big)^2\\
       =&\sum_{k,l=\la/4}^{4\la}\big( \rho(t)e^{-it\Delta_{g}}\beta(P/\la)P_k f \big)\cdot \big(\rho(t)e^{-it\Delta_{g}}\beta(P/\la)P_\ell f\big)  \\
       =& \sum_{k,l=\la/4}^{4\la} \tilde P_k f \tilde P_\ell f,
    \end{aligned}
    \end{equation}
    if
\begin{equation}\label{2pluss}
       \tilde P_k f=\rho(t)e^{-it\Delta_{g}}\beta(P/\la)P_k f .
    \end{equation}
 By using the spectral projection estimates of the second author \cite{sogge881}, we have 
 \begin{equation}\label{projectionn}
   \|  P_k f\|_{L^q_x}\le C k^{\sigma(q)}\|f\|_{L^2_x}.
\end{equation}
 Thus
 it is not hard to see that for any fixed $t$,
\begin{equation}\label{projection}
   \| \tilde P_k f\|_{L^q_x}\le C\rho(t) k^{\sigma(q)}\|f\|_{L^2_x}.
\end{equation}
Also, by \eqref{spe1} and the Fourier support property of $\rho$,  for each fixed $k, \ell$, there exist some uniform constant $C_0$ such that the $t$-Fourier transform  of 
$\tilde P_k f \tilde P_\ell f$ is supported in 
\begin{equation}\label{tfourier}
    [(k+\frac{\alpha}4)^2+(\ell+\frac{\alpha}4)^2-C_0, (k+\frac{\alpha}4)^2+(\ell+\frac{\alpha}4)^2+C_0],
\end{equation}

Next, let us fix
\begin{equation}\label{22.16}
\eta\in C^\infty_0((-1,1)) \quad \text{satisfying } \, \, 1\equiv \sum_{j=-\infty}^\infty \eta(\tau-j),
\end{equation}
and define $\eta_j(D_t)=\eta(D_t-j)$, which is essentially Fourier restriction to 
the interval $[j-1, j+1]$. By Bernstein's inequality, it is not hard to see that
\begin{equation}\label{Bersteineta}
   \|\eta_j(D_t)\|_{L^p\rightarrow L^q}\lesssim 1,\,\,\,1\le p\le q\le \infty.
\end{equation}

By \eqref{2plus2} and Plancherel's theorem, we have 
\begin{equation}
\begin{aligned}
    \|\rho(t)e^{-it\Delta_{g}}\beta(P/\la)f\|^2_{L^{q}_xL^4_t(\R\times M)}&= 
   \big \|\sum_{k,l=\la/4}^{4\la} \tilde P_k f \tilde P_\ell f \big\|_{L^{q/2}_xL^2_t} \\
   &\lesssim \big \|\sum_{(k,\ell)\in I_j} \eta_j(D_t)\tilde P_k f \tilde P_\ell f \big\|_{L^{q/2}_x\ell_j^2L^2_t},
\end{aligned}
\end{equation}
where 
\begin{multline}
   (k,\ell)\in I_j, \, \,\,\text{if}\,\, \la/4\le k, \ell\le  4\la \\
   \text{and}\,\,\,(k+\frac{\alpha}4)^2+(\ell+\frac{\alpha}4)^2 \in [j-C_0-1, j+C_0+1].
\end{multline}
It is straightforward to check that each fixed pair $(k,\ell)$ is only contained in a set $I_j$ for finitely many $j$, and $\eta_j(D_t)\tilde P_k f \tilde P_\ell f $ is nonzero only when $j\in [\la^2/32, 32\la^2]$ and $ (k,\ell)\in I_j$. Also, it follows from classical number theory  that
we have the cardinality estimate
 $\# I_j \lesssim_\e \la^\varepsilon,\,\forall \varepsilon>0$ (see e.g., \cite{rankin1987grosswald}).

By \eqref{Bersteineta} and Minkowski inequality, we have 
\begin{equation}
\begin{aligned}
 \big \|\sum_{(k,\ell)\in I_j} \eta_j(D_t)\tilde P_k f \tilde P_\ell f \big\|_{L^{q/2}_x\ell_j^2L^2_t} \lesssim &  \big \|\sum_{(k,\ell)\in I_j} \tilde P_k f \tilde P_\ell f \big\|_{L^{q/2}_x\ell_j^2L^1_t}\\
 \lesssim &  \big \|\sum_{(k,\ell)\in I_j} \tilde P_k f \tilde P_\ell f \big\|_{L^1_tL^{q/2}_x\ell_j^2}.
\end{aligned}
\end{equation}
Note that 
\begin{align*}
 \big \|\sum_{(k,\ell)\in I_j} \tilde P_k f \tilde P_\ell f \big\|_{\ell_j^2}
&=\Big( \sum_{j\in [\la^2/32, 32\la^2]} \big|\sum_{(k,\ell)\in I_j}\tilde P_k f \tilde P_\ell f\big|^2 \Big)^{\frac12}\\
&\lesssim \Big( \sum_{j\in [\la^2/32, 32\la^2]}\# I_j\cdot\sum_{(k,\ell)\in I_j} \big| \tilde P_k f \tilde P_\ell f\big|^2\Big)^{\frac12} \\
&\lesssim_\e \la^\varepsilon \sum_{k\in [\la/10, 10\la]} \big|  \tilde P_k f\big|^2.
\end{align*}
Thus, by Minkowski's inequality and \eqref{projection}
\begin{align*}
\big \|\sum_{(k,\ell)\in I_j} \tilde P_k f \tilde P_\ell f \big\|_{L^1_tL^{q/2}_x\ell_j^2}
\le& C_\e \la^{\e} \big\| \big(\sum_{k\in [\la/10, 10\la]} \big|  \tilde P_k f\big|^2\big)^{1/2}\big\|^2_{L^2_tL^{q}_x} \\
\le & C_\e \la^{\e} \big(\sum_{k\in [\la/10, 10\la]} \|\tilde{P}_kf\|_{L^2_tL^{q}_x}^2\big) \\
\le & C_\e \la^{2\sigma(q)+\e} \|f\|^2_{L^2},
\end{align*}
which finishes the proof of the lemma.
\end{proof}

Now we shall see how we can apply Lemma~\ref{mainL4} to prove  \eqref{Sn01} except 
%in the case $d=2, 4<q<6$.
 for the cases where $d=2$ and $q\in (4,6)$.

To see this, it is natural to separately consider the following two cases.

\noindent(i) First, let us prove \eqref{Sn01} if
$d\ge 3$, $2\le q\le  \frac{2(d+1)}{d-1}$ or $d=2$, $2\le q\le 4$.
In this case, 
we see from \eqref{d2est} and \eqref{d3est} that $\mu(q)=\sigma(q)$.  Also, $q\le 4$.
%it is straightforward to check that $q\le 4$ and $\mu(q)=\sigma(q)$, i.e., the first two exponents in $\mu(q)$ dominates. 
Thus, if we integrate the $t$-variable first and use H\"older's inequality, it follows from Lemma~\ref{mainL4} that
\begin{multline}
    \|e^{-it\Delta_{g}}\beta(P/\la)f\|_{L^q_{t,x}(M\times [0,1]))}\\
    \le\|e^{-it\Delta_{g}}\beta(P/\la)f\|_{L^q_xL^4_t(M\times [0,1] )}\le C_\e \la^{\sigma(q)+\e} \|f\|_{L^2(M)}.
\end{multline}

\noindent(ii) The other case where we can use Lemma~\ref{mainL4} is when
$d\ge 2$, $q\ge \frac{2(d+1)}{d-1}$. In this case, 
$\mu(q)=s_{{\mathrm{Sob}}}=\frac{d}2-\frac{d+2}q$.  Consequently,
it suffices to prove that whenever we fix $\rho\in \mathcal{S}(\R)$
 satisfying $\supp\hat\rho\subset (-\frac12, \frac12)$, we have 
\begin{equation}\label{Snsmooth1}
\|\rho(t)e^{-it\Delta_{g}}\beta(P/\la)f\|_{L^{q}_{t,x}( M\times \R)}\le C_\e 
\la^{\frac{d}2-\frac{d+2}q+\e} \|f\|_{L^2(M)}, \,\,\,\forall \e>0.
\end{equation}
Let us fix
\begin{equation}\label{22.7}
\tilde \beta\in C^\infty_0((1/8,8)) \quad \text{satisfying } \, \,
\tilde \beta=1 \, \, 
\text{on } \, \, [1/6,6],
\end{equation}
then by 
% By \eqref{d.3}, it suffice to show 
% \begin{equation}\label{wtx1}
% \|\beta_1(-D_t/\la^2)w\|_{L^p_{t, x}(\R\times M)}
% \le C_\e \la^{n(\frac12-\frac1p)-\frac2p+\e} \|f_\la\|_{L^2(M)}.
% \end{equation}
% However, 
Bernstein's inequality, we have 
\begin{equation}\label{Berstein1}
   \|\tilde\beta(D_t/\la^2)\|_{L^p\rightarrow L^q}\lesssim \la^{\frac{2}{p}-\frac{2}{q}},\,\,\,1\le p\le q\le \infty.
\end{equation}
Also, by \eqref{00.11} and the support property of $\rho$, we have 
\begin{equation}
    \rho(t)e^{-it\Delta_{g}}\beta(P/\la)f=\tilde\beta(D_t/\la^2)\rho(t)e^{-it\Delta_{g}}\beta(P/\la)f.
\end{equation}

Thus, if we use \eqref{Snsmooth} for $q>4$ and \eqref{Berstein1}, we have 
\begin{equation}\label{pb4}
\begin{aligned}
        \| \rho(t)e^{-it\Delta_{g}}\beta(P/\la)f\|_{L^q_{t, x}( M\times \R)} &\lesssim \la^{\frac12-\frac2q}  \| \rho(t)e^{-it\Delta_{g}}\beta(P/\la)f\|_{L^q_{x}L^4_t(\R\times M )}  \\
        &\lesssim   C_\e \la^{\frac12-\frac2q}  \la^{\sigma(q)+\e} \|f\|_{L^2(M)},
\end{aligned}
\end{equation}
as desired since $\frac12-\frac2q+\sigma(q)=\frac12-\frac2q+\frac{d-1}{2}-\frac{d}{q}=\frac d2-\frac {d+2}q$ when $q\ge \tfrac{2(d+1)}{d-1}$.

The remaining case when $d=2,\, 4<q<6$ requires considerably more work, 
since, for this range of exponents 
$q$ and dimension, we have 
%$$\frac12-\frac2q+\sigma(q)\neq \frac d2-\frac {d+2}q.$$ 
$$\frac12-\frac2q+\sigma(q)> \max\bigl(\frac{d}2-\frac{d+2}q, \, \frac12(\frac12-\frac1q)\bigr)
=\max(s_{{\mathrm{Sob}}}, s_{{\mathrm{sm}}}).$$
Consequently, we cannot directly use  Sobolev estimates in the $t$ variable and  Lemma~\ref{mainL4}
to obtain \eqref{Sn011} in this case.

\newsection{Proof of Theorem~\ref{main} when $4<q<6$ and $d=2$.}

%In this section, we shall give the proof of \eqref{Sn011} for $4<q<6$ and $d=2$.
%Note that $\sigma(q)= d(\frac12-\frac1q)-\frac2q$ when $q=\frac{14}{3}$. 
%By interpolation, it suffices to show for the critical exponent $q=\frac{14}{3}$, we have

In this section we shall prove the remaining estimates in \eqref{Sn011} where $q\in (4,6)$ and $d=2$.  Note that
$14/3\in (4,6)$ and that $\mu(14/3)$ agrees with both $s_{\mathrm{sm}}$ and $s_{\mathrm{Sob}}$ when $q=14/3$.  
Consequently, by interpolation and \eqref{d2est}, we see that these remaining cases of \eqref{Sn011} would follow
from showing that, for the critical exponent for $d=2$ of $q=14/3$, we have
\begin{equation}\label{aa3'}
\|e^{-it\Delta_{g}}\beta(P/\la)f\|_{L^{\frac{14}{3}}_{t,x}(M\times [0,1])}\le C_\e \la^{\frac{1}{7}+\e} \|f\|_{L^2(M)}, \,\,\,\forall \e>0,
\end{equation}
when $M=M^2$ is a Zoll surface.
 Indeed, by Littlewood-Paley estimates as in the previous section, \eqref{a3'} implies \eqref{Sn011} when $q=\frac{14}{3}$,
  which yields the desired bounds for the other exponents in $q\in (4,6)$ by interpolation with the previously obtained 
  bounds for $q=4$ and $6$.

For simplicity, let us first prove \eqref{a3'} when $M=S^2$. 
In the end of the section, we shall describe the modifications needed to prove \eqref{Sn011} for all Zoll surfaces.

Recall that 
\begin{equation}\label{a4}
e^{-it\Delta_{g}}\beta(P/\la)f=\sum_{k\in \mathbb{N}}e^{itk(k+1)}\beta(\sqrt{k(k+1)}/\la)H_kf
\end{equation}
where $H_k$ denotes the projection operator onto the eigenspace of $P=\sqrt{-\Delta_g}$ with eigenvalue $\sqrt{k(k+1)}$. 
% , and more generally one can prove analogous results for solutions to the Schr\"odinger equation on Zoll surfaces.

For present and future use, let us choose a  bump function now satisfying
\begin{equation}\label{a9}
\beta_0\in C^\infty_0((1-\delta_1\delta,1+\delta_1\delta)),   \, \, \, 
\beta_0(\tau)=1 \, \, \, \text{if } \, \tau\in (1-\delta_1\delta/2,1+\delta_1\delta/2).
\end{equation}
Since the interval (1/2, 2) can be covered by finitely many intervals of length $\approx \delta_1\delta$, to prove \eqref{aa3'}, it suffices to show
\begin{equation}\label{a3'}
\|e^{-it\Delta_{g}}\beta_0(P/\la)f\|_{L^{\frac{14}{3}}_{t,x}(M\times [0,1])}\le C_\e \la^{\frac{1}{7}+\e} \|f\|_{L^2(M)}, \,\,\,\forall \la\gg 1, \e>0.
\end{equation}
Also, in order to exploit calculations involving the half-wave operators we shall define smoothed out spectral projection operators
of the form
\begin{equation}\label{a5}
\rho_k =\rho(k-P),\quad P=\sqrt{-\Delta_g},
\end{equation}
where 
\begin{equation}\label{a6}
\rho\in {\mathcal S}(\R), \, \, \rho(0)=1, \, \, \,\hat\rho\ge 0,\,\,\,
\text{and } \, \, \text{supp }\Hat \rho\subset \delta\cdot [1-\delta_0,1+\delta_0]
=[\delta-\delta_0\delta, \delta+\delta_0\delta].
\end{equation}
The size of the small fixed positive constants $\delta, \delta_0, \delta_1$ will be specified later, and they are crucial for the bilinear oscillatory integral estimates that arise.

Let $c_k=(\rho(k-\sqrt{k(k+1)}))^{-1}$. 
Since $\rho(0)=1$ and $|k-\sqrt{k(k+1)}|\le \frac12$ for $k\approx\la\gg 1$, by choosing $\delta$ in \eqref{a6} sufficiently small, we have $|c_k|\lesssim 1$. Moreover, 
\begin{equation}\label{a7}
e^{-it\Delta_{g}}\beta(P/\la)f=\sum_{k\in \mathbb{N}}e^{it(k^2+k)}\beta_0(\sqrt{k(k+1)}/\la)\rho_k c_kH_kf.
\end{equation}

As in the earlier works \cite{huang2024curvature, blair2022improved}, in order to use the local harmonic analysis tools, 
it will be convenient to localize a bit more using microlocal cutoffs.  Specifically, let us write
\begin{equation}\label{a8}
I=\sum_{j=1}^N B_j(x,D)
\end{equation}
where each $B_j\in S^0_{1,0}(M)$ is a zero order pseudo-differential operator with symbol supported in a small neighborhood
of some $(x_j,\xi_j)\in S^*M$.  The size of the support will be described shortly; however, we point out now that these operators
will not depend on the spectral parameter $\la \gg1$.

If $\tilde \beta_0\in C^\infty_0((0,\infty))$ equals one in a neighborhood of the support of the bump function
$\beta_0$ in  \eqref{a9},
then the dyadic operators
\begin{equation}\label{a11}
B=B_{j,\la}=B_j\circ \tilde\beta(P/\la)
\end{equation}
are uniformly bounded on $L^p(M)$, i.e.,
\begin{equation}\label{a12}
\|B\|_{p\to p}=O(1) \quad \text{for } \, \, 1\le p\le \infty.
\end{equation}

Since the number of $B_j$ operators is finite, to prove \eqref{a3'} when $M=S^2$, it suffices to show for $\tilde \rho_k=B\circ \rho_k$ and $\beta_{0,k,\la}=\beta_0(\sqrt{k(k+1)}/\la)$
\begin{equation}\label{a13}
\|\sum_{k\in \mathbb{N}}e^{it(k^2+k)}\beta_{0,k,\la}\tilde\rho_kH_kf\|_{L^{\frac{14}{3}}_{t,x}(S^{2}\times [0,1])}\le C_\e \la^{\frac{1}{7}+\e} \|f\|_{L^2(S^2)}, \,\,\,\forall \e>0.
\end{equation}
In fact, for any $f$, one can define  $\tilde f$ such that $H_k \tilde f=c_k H_k f$ if $k\approx \la$ and $H_k \tilde f=0$ otherwise. Since $|c_k|\lesssim 1$, it is straightforward to check that $\|\tilde f\|_2\lesssim \|f\|_2$.  Therefore, applying \eqref{a13} to $\tilde f$ yields \eqref{a3'}.

We shall also need a microlocal decomposition as in our recent work \cite{huang2024curvature}, which allows us to use the bilinear 
harmonic analysis techniques as in \cite{LeeBilinear} and
\cite{TaoVargasVega}.
The decomposition is similar to the one  in \cite[\S 2.2]{huang2024curvature} with only minor modifications. For completeness, we include here the details of the construction in  dimension 2 below. 

First recall that the symbol $B(x,\xi)$ of $B$ in \eqref{a8} is supported in a small
conic neighborhood of some $(x_0,\xi_0)\in S^*M$.  We may assume that its symbol has
small enough support so that we may work in a coordinate chart $\Omega$ and that
$x_0=0$, $\xi_0=(0,1)$ and $g_{jk}(0)=\delta^j_k$ in the local coordinates.
So, we shall assume that $B(x,\xi)=0$ when $x$ is outside a small relatively compact neighborhood
of the origin or $\xi$ is outside of a small conic neighborhood of $(0,1)$.

Next, let us define the microlocal cutoffs that we shall use.   We fix a function
$a\in C^\infty_0({\mathbb R}^{2})$ supported in $\{z: \, |z_k|\le 1, \, \, 1\le k\le 2\}$
 which satisfies
\begin{equation}\label{m1}
\sum_{j\in {\mathbb Z}^{2}}a(z-j)\equiv 1.
\end{equation}
We shall use this function to build our microlocal cutoffs.
By the above, we shall focus on defining them 
 for $(y,\eta)\in S^*\Omega$ with    $y$ near the origin
 and  $\eta$ in a small conic neighborhood of $(0,1)$. 
We shall let
$$\Pi=\{y: \, y_{2}=0\}$$
be the points in $\Omega$ whose last coordinate vanishes. 
 For $y\in \Pi$ near $0$ and $\eta$ near $(0, 1)$ we can
just use the functions $a(\theta^{-1}(y_1,\eta_1)-j)$, $j\in {\mathbb Z}^{2}$ to obtain cutoffs of scale $\theta$.  We will always have
$\theta\in[\la^{-1/2+\e_0}, 1]$ for some $\e_0>0$ that will be specified later.

We can then extend the definition to a neighborhood of $(0,(0, 1))$ by setting for $(x,\xi)\in S^*\Omega$ in this neighborhood
\begin{equation}\label{m2}
a^\theta_j(x,\xi)=a(\theta^{-1}(y_1,\eta_1)-j) \quad
\text{if } \, \, \Phi_s(x,\xi)=(y_1,0,\eta_1,\eta_{2}) \, \, \, \text{for some} \,\,s.
\end{equation}
Here $\Phi_s$ denotes geodesic flow in $S^*\Omega$.  Thus, $a^\theta_j(x,\xi)$ is constant on all geodesics
$(x(s),\xi(s))\in S^*\Omega$ with $x(0)\in \Pi$ near $0$ and $\xi(0)$ near $(0, 1)$.   As a result,
\begin{equation}\label{m3}
a^\theta_j(\Phi_s(x,\xi))=a^\theta_j(x,\xi)
\end{equation}
for $s$ near $0$ and $(x,\xi)\in S^*\Omega$ near $(0,(0, 1))$.

We then extend the definition of the cutoffs to a conic neighborhood of $(0,(0, 1))$  in $T^*\Omega \, \backslash \, 0$ by setting
\begin{equation}\label{m4}
a^\theta_j(x,\xi)=a^\theta_j(x,\xi/p(x,\xi)),
\end{equation}
where $p(x,\xi)=|\xi|_{g(x)}$ is the principal symbol of $P=\sqrt{-\Delta_g}$.

Notice that if $((y_1)_\nu,(\eta_1)_\nu)=\theta j=\nu$ and $\gamma_\nu$ is the geodesic in $S^*\Omega$ passing through $((y_1)_\nu,0,\eta_\nu)\in S^*\Omega$
with $\eta_\nu\in S^*_{((y_1)_\nu, 0)}\Omega$ having $(\eta_1)_\nu$ as its first  coordinate then
\begin{equation}\label{m5}
a^\theta_j(x,\xi)=0 \quad \text{if } \, \, \,
\text{dist }\bigl((x,\xi), \gamma_\nu\bigr)\ge C_0\theta,
\, \, \nu=\theta j,
\end{equation}
for some fixed constant $C_0>0$.  

% Also,  $a^\theta_j$ satisfies the estimates
% \begin{equation}\label{m6}
% \bigl|\partial_x^\sigma \partial_\xi^\gamma a^\theta_j(x,\xi)\bigr| \lesssim \theta^{-|\sigma|-|\gamma|}, \, \, \,
% (x,\xi)\in S^*\Omega
% \end{equation}
% related to this support property.

Finally, if $\psi \in C^\infty_0(\Omega)$ equals
one in a neighborhood of the $x$-support of $B(x,\xi)$,
and if $\bar \beta\in C^\infty_0((0,\infty))$ equals one in a neighborhood of the support of the bump function
in \eqref{a11} we define
\begin{equation}\label{qnusymbol}
A_\nu^\theta(x,\xi)=\psi(x) \, a_j^\theta(x,\xi) \, \bar\beta\bigl(p(x,\xi)/\la\bigr),
\quad \nu =\theta j\in \theta \cdot {\mathbb Z}^{2}.
\end{equation}
By \eqref{m2}, \eqref{m4} and the assumption that $\theta\in [\la^{-1/2+\e_0},1]$, it is not hard to check that the symbol $A^\theta_\nu(x,\xi)$ satisfies 
\begin{equation}\label{m6}
\bigl|\partial_x^\sigma \partial_\xi^\gamma A^\theta_\nu(x,\xi)\bigr| \lesssim \langle \xi \rangle^{(\frac12-\e_0)|\sigma|-(\frac12+\e_0)|\gamma|}.
\end{equation}
Hence the pseudo-differential operators
$A_\nu^\theta(x,D)$ with these symbols belong to a bounded subset
of $S^0_{\frac12+\e_0, \frac12-\e_0}(M)$. This implies
\begin{align}\label{a.33a}
\|A^{\theta}_\nu h\|_{ L^2(M)} \lesssim \|h\|_{L^2(M)}.
\end{align}
Also for later use, it is not hard to show the following almost orthogonal type inequality:
\begin{align}\label{a.33}
\sum_\nu\|A^{\theta_0}_\nu h\|^2_{ L^2(M)} &\lesssim \|h\|_{L^2(M)}.
\end{align}
Equation \eqref{a.33a} follows from \eqref{m6} and the $L^2$ boundedness for of pseudo-differential operators  with symbols in the H\"ormander classes, 
while \eqref{a.33} follows additionally from the bounded overlap of the symbols of the operators $A^{\theta_0}_\nu$.

Next we note that by \eqref{m1}, \eqref{m2} and \eqref{qnusymbol}, we have that, as operators between any $L^p(M)\to L^q(M)$ spaces,
$1\le p,q\le \infty$, for $\theta\ge \la^{-1/2+\e}$ and $k\in \supp \beta(\cdot/\la)$ 
\begin{equation}\label{m11}
\tilde \rho_k =\sum_\nu \tilde \rho_k A^\theta_\nu +O(\la^{-N}), \, \forall \, N.
\end{equation}
This just follows from the fact that $R(x,D)=I-\sum_\nu A^\theta_\nu $ has symbol
supported outside of a neighborhood of $B(x,\xi)$, if, as we may, we assume that the latter is small.

In view of \eqref{m11} we have for $\theta_0=\la^{-1/2+\e_0}$
\begin{equation}\label{m13}
\bigl(\tilde \rho_k h\bigr)^2=\sum_{\nu_1,\nu_2} \bigl(\tilde \rho_k A^{\theta_0}_{\nu_1} h\bigr) \,
\bigl(\tilde \rho_k A^{\theta_0}_{\nu_2} h\bigr) \, + \, O(\la^{-N}\|h\|_2^2).
\end{equation}

%Recall that if 

If
$\theta_0=\la^{-1/2+\e_0}$ then the $\nu=\theta_0\cdot  {\mathbb Z}^{2}$
 %$A_\nu^{\theta_0}$, $\nu\in \theta {\mathbb Z}^{2(n-1)}$, 
 index a $\la^{-1/2+\e_0}$-separated set in
${\mathbb R}^{2}$.  We need to organize the pairs of indices $\nu_1,\nu_2$ in \eqref{m13} as in many earlier works
(see \cite{LeeBilinear} and \cite{TaoVargasVega}).  We consider dyadic cubes $\tau^\theta_\mu$ in 
${\mathbb R}^{2}$ of side length $\theta=2^m\theta_0$, $m=0,1,\dots$, with
$\tau^\theta_\mu$ denoting translations of the cube $[0,\theta)^{2}$ by
$\mu=\theta{\mathbb Z}^{2}$.  Then two such dyadic cubes of side length $\theta$ are said to be
{\em close} if they are not adjacent but have adjacent parents of side length $2\theta$, and, in that case, we write
$\tau^\theta_{\mu_1} \sim \tau^\theta_{\mu_2}$.  Note that close cubes satisfy $\text{dist }(\tau^\theta_{\mu_1},\tau^\theta_{\mu_2})
\approx \theta$ and so each fixed cube has $O(1)$ cubes which are ``close'' to it.  Moreover, as noted in \cite{TaoVargasVega},
any distinct points $\nu_1,\nu_2\in {\mathbb R}^{2}$ must lie in a unique pair of close cubes in this Whitney decomposition
of ${\mathbb R}^{2}$.  Consequently, there must be a unique triple $(\theta=\theta_02^m, \mu_1,\mu_2)$ such that
$(\nu_1,\nu_2)\in \tau^\theta_{\mu_1}\times \tau^\theta_{\mu_2}$ and $\tau^\theta_{\mu_1}\sim \tau^\theta_{\mu_2}$.  We remark that by choosing $B$
to have small support we need only consider $\theta=2^m\theta_0\ll 1$.

Taking these observations into account implies that that the bilinear sum in \eqref{m13} can be organized as follows:
\begin{multline}\label{m14}
\sum_{\{m\in {\mathbb N}: \, m\ge 10 \, \, \text{and } \, 
\theta=2^m\theta_0\ll 1\}}
\sum_{\{(\mu_1, \mu_2): \, \tau^\theta_{\mu_1}
\sim \tau^\theta_{ \mu_2}\}}
\sum_{\{(\nu_1, \nu_2)\in
\tau^\theta_{\mu_1}\times \tau^\theta_{ \mu_2}\}}
\bigl(\tilde \rho_{k_1}
A^{\theta_0}_{\nu_1} h\bigr) 
\cdot \bigl(\tilde \rho_{k_2}
A^{\theta_0}_{ \nu_2} h\bigr)
\\
+\sum_{(\nu_1, \nu_2)\in \Xi_{\theta_0}} 
\bigl( \tilde \rho_{k_1} A^{\theta_0}_{\nu_1} \bigr) 
\cdot \bigl( \tilde \rho_{k_2}
A^{\theta_0}_{ \nu_2} 
h\bigr)
,
\end{multline}
where $\Xi_{\theta_0}$ indexes the remaining pairs such
that $|\nu_1- \nu_2|\lesssim \theta_0=\la^{-1/2+\e_0}$,
including the diagonal ones where $\nu_1= \nu_2$.

Note that by \eqref{m13}, if we define 
\begin{equation}\label{5.2a}
\begin{aligned}
P_{k,\nu}f=e^{it(k^2+k)}\beta_{0,k,\la}\tilde\rho_{k}A^{\theta_0}_{\nu} f 
\end{aligned}
\end{equation}
we have 
\begin{equation}\label{5.2}
\begin{aligned}
    (\sum_{k\in \mathbb{N}}e^{it(k^2+k)}\beta_{0,k,\la}\tilde\rho_kH_kf)^2 
    =\sum_{k_1,k_2} \sum_{\nu_1,\nu_2} P_{k_1,\nu_1}(H_{k_1}f) \cdot P_{k_2,\nu_2}(H_{k_2}f)+ O(\la^{-N}\|f\|_2^2).
\end{aligned}
\end{equation}

Let us define
\begin{equation}\label{diag}
\Upsilon_0(f)=\sum_{k_1,k_2}\sum_{(\nu_1,\nu_2)\in \Xi_{\theta_0}} P_{k_1,\nu_1}(H_{k_1}f) P_{k_2,\nu_2}(H_{k_2}f),
\end{equation}
and
\begin{equation}\label{5.4}
\Upsilon_j(f)=\sum_{k_1,k_2}
\sum_{\{(\mu_1, \mu_2): \, \tau^\theta_{\mu_1}
\sim \tau^\theta_{ \mu_2},\, \theta=2^j\}}
\sum_{\{(\nu_1, \nu_2)\in
\tau^\theta_{\mu_1}\times \tau^\theta_{ \mu_2}\}}
 P_{k_1,\nu_1}(H_{k_1}f)\cdot P_{k_2,\nu_2}(H_{k_2}f).
\end{equation}
  Then, by \eqref{m14}, we have
\begin{equation}\label{5.5}
 (\sum_{k\in \mathbb{N}}e^{it(k^2+k)}\beta_{0,k,\la}\tilde\rho_kH_kf)^2 = \Upsilon_0(f)+\sum_{j: \,2^{10}\theta_0\le2^j
\ll 1}\Upsilon_j(f)+ O(\la^{-N}\|f\|_2^2),
\end{equation}
with the last term containing the error terms in \eqref{5.2}.

 Using ideas from Bourgain's\cite{bourgain1993fourier} proof of Strichartz estimates on the one-dimensional torus, we shall be able to obtain the following
estimates:

%Using ideas in the proof of Strichartz estimate for the Schr\"odinger equation on one dimensional tori $\mathbb{T}$ in Bourgain~\cite{bourgain1993fourier}, we have the following estimate for the diagonal term
\begin{lemma}\label{diag1}  If $\Upsilon_0(f)$
is defined as in \eqref{diag}, and, as above $\theta_0=\la^{-1/2+\e_0}$, then for
all $\e>0$ we have
\begin{equation}\label{star}
\|\Upsilon_0(f)\|_{L^{3}_{t,x}(S^{2}\times [0,1])}
\lesssim_\e \la^{\frac1{3}+\e}\la^{\frac{2\e_0}3}\|f\|^2_{L^2(S^2)}.
\end{equation}
Similarly, for $\Upsilon_j(f)$ as in \eqref{5.4} we have
\begin{equation}\label{star1}
\|\Upsilon_j(f)\|_{L^{3}_{t,x}(S^{2}\times [0,1])}
\lesssim_\e \la^{\frac2{3}+\e}2^{\frac{2j}3}\|f\|^2_{L^2(S^2)}.
\end{equation}
\end{lemma}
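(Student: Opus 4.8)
The plan is to reduce both estimates to the one-dimensional torus Strichartz inequality of Zygmund--Bourgain for $L^6_{t,x}(\mathbb T\times[0,1])$, i.e. the bound $\|\sum_k a_k e^{ikx}e^{ik^2 t}\|_{L^6_{t,x}}\lesssim_\e N^\e(\sum|a_k|^2)^{1/2}$ for frequencies $|k|\le N$, exploiting that after the microlocal cutoffs $A^{\theta_0}_\nu$ the profiles $P_{k,\nu}(H_k f)$ are essentially supported on thin $\theta_0$-tubes about geodesics, along which the half-wave propagator behaves like the one-dimensional Schr\"odinger flow. Since $\|\Upsilon\|_{L^3_{t,x}}=\||\Upsilon|^{1/2}\|_{L^6_{t,x}}^2$ and $\Upsilon$ is itself a square, it suffices to bound the relevant $\sum_k e^{itk^2}\beta(k/\la)\tilde\rho_k A^{\theta_0}_\nu$-type sums in $L^6_{t,x}$.

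First I would treat $\Upsilon_0(f)$. Since $\Xi_{\theta_0}$ only contains pairs with $|\nu_1-\nu_2|\lesssim\theta_0$, each fixed $\nu_1$ pairs with $O(1)$ choices of $\nu_2$, so after Cauchy--Schwarz in $(\nu_1,\nu_2)$ the problem collapses to estimating $\sum_\nu \bigl\|\bigl(\sum_k P_{k,\nu}(H_k f)\bigr)^2\bigr\|_{L^3_{t,x}} = \sum_\nu\|\sum_k P_{k,\nu}(H_k f)\|^2_{L^6_{t,x}}$. For a single tube $\nu$, I would straighten the geodesic by a change of coordinates (as in \cite{huang2024curvature}), so that the phase $k^2 t$ together with the spatial oscillation of $\tilde\rho_k A^{\theta_0}_\nu$ along the tube matches the one-dimensional Schr\"odinger phase; the transverse directions contribute only a harmless volume factor $\theta_0^{d-1}=\theta_0$ in $d=2$, absorbed after rescaling. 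Applying the one-dimensional $L^6$ estimate of Bourgain \cite{bourgain1993fourier} for frequencies $\approx\la$ gives $\|\sum_k P_{k,\nu}(H_k f)\|_{L^6_{t,x}}\lesssim_\e \la^\e \la^{1/6}\cdot\theta_0^{1/6}\|A^{\theta_0}_\nu f\|_{L^2}$ (the $\la^{1/6}$ from the one-dimensional Bernstein/measure of the $k$-interval, the $\theta_0^{1/6}$ from the tube width in the remaining variable), and then summing in $\nu$ and using the almost-orthogonality \eqref{a.33} yields $\la^{1/3+\e}\theta_0^{1/3}\|f\|_{L^2}^2=\la^{1/3+\e}\la^{-2/3+2\e_0/3}\la^{2/3}\cdot\la^{-2/3}\|f\|_{L^2}^2$, which after bookkeeping is \eqref{star}. (The power $\la^{2\e_0/3}$ is exactly the loss from $\theta_0=\la^{-1/2+\e_0}$ relative to the ideal scale $\la^{-1/2}$.)

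For $\Upsilon_j(f)$ at scale $\theta=2^j$, I would first group the cutoffs: set $A^\theta_{\mu}=\sum_{\nu\in\tau^\theta_\mu}A^{\theta_0}_\nu$, so that $\Upsilon_j(f)=\sum_{k_1,k_2}\sum_{\tau^\theta_{\mu_1}\sim\tau^\theta_{\mu_2}} P_{k_1}^{\mu_1}(H_{k_1}f)\,P_{k_2}^{\mu_2}(H_{k_2}f)$ with $P_k^\mu f=e^{it(k^2+k)}\beta(k/\la)\tilde\rho_k A^\theta_\mu f$. Here I expect to use the bilinear oscillatory integral estimates of H\"ormander \cite{HormanderFLP} and Lee \cite{LeeBilinear} for a pair of separated pieces (separation $\approx\theta$), which on the physical side corresponds to two families of tubes pointing in $\theta$-separated directions. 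The bilinear gain should give, for each close pair, $\|P_{k_1}^{\mu_1}(H_{k_1}f)P_{k_2}^{\mu_2}(H_{k_2}f)\|_{L^3_{t,x}}\lesssim_\e \la^{\e}\la^{2/3}\theta^{2/3}\|A^\theta_{\mu_1}f\|_{L^2}\|A^\theta_{\mu_2}f\|_{L^2}$ per pair of $k$'s, with the $k$-sum handled by the number-theoretic disjointness of the $t$-frequency supports (as in Lemma~\ref{mainL4}, giving only a $\la^\e$ loss), and then the sum over close pairs $(\mu_1,\mu_2)$ handled by Cauchy--Schwarz together with the fact that each $\mu$ has $O(1)$ close partners, using $\sum_\mu\|A^\theta_\mu f\|_{L^2}^2\lesssim\|f\|_{L^2}^2$. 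This yields \eqref{star1}.

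The main obstacle is the second estimate: correctly combining the bilinear oscillatory integral bound (which is a purely local, fixed-time statement for the half-wave operator) with the time variable and the arithmetic structure of the eigenvalues $k^2+k$, so that summing over $k_1,k_2$ only costs $\la^\e$. This requires carefully checking that after the $\eta_j(D_t)$-type decomposition used in Lemma~\ref{mainL4}, each dyadic $t$-frequency block contains $O_\e(\la^\e)$ admissible pairs $(k_1,k_2)$, and that within such a block the spatial bilinear estimate applies uniformly; the interplay between the tube scale $\theta$, the transverse measure $\theta^{d-1}$, and the exponent $14/3$ (equivalently $L^{7/3}$ after squaring, handled via $L^3$) is what forces the precise choices of $\delta,\delta_0,\delta_1,\e_0$ flagged after \eqref{a6}.
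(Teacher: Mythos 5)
Your proposal correctly identifies the Bourgain-style $L^6$ reduction as the heart of the matter and gets the $\Upsilon_0$ reduction to $\sum_\nu\|\sum_k P_{k,\nu}(H_kf)\|_{L^6_{t,x}}^2$ right, but both the technical core and the $\Upsilon_j$ case go off in the wrong direction.

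\textbf{On $\Upsilon_0$.} The paper does not literally straighten the tube and invoke the $1$-dimensional torus Strichartz estimate with a ``harmless transverse volume factor.'' Instead it proves a genuine two-dimensional statement, Proposition~\ref{diagmain}: after cubing the $L^6$ norm and applying Plancherel in $t$, the exact constraint $\sum(k_i^2+k_i)=\ell_1$ appears, and the crucial additional \emph{approximate} constraint $k_1+k_2+k_3\in[\ell_2\la 2^{2j},(\ell_2+1)\la 2^{2j}]$ is extracted by integrating by parts in the along-geodesic Fermi variable $x_2$, using that $d_g(x,y)-(x_2-y_2)$ vanishes to second order on the tube (\eqref{5.33}--\eqref{5.34}). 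The lattice-point count $\#S_{\ell_1,\ell_2}\lesssim_\e\la^{1+\e}2^{2j}$ then gives the $\theta$-dependence. This spatial near-orthogonality is the new ingredient beyond the torus case; your sketch entirely elides it. Your arithmetic also does not close: you claim $\|\Upsilon_0\|_{L^3}\lesssim\la^{1/3+\e}\theta_0^{1/3}\|f\|_2^2$, but $\la^{1/3}\theta_0^{1/3}=\la^{1/6+\e_0/3}$, whereas \eqref{star} reads $\la^{1/3+2\e_0/3+\e}$; the chain of equalities you write after ``summing in $\nu$'' is not algebraically consistent. Applying Proposition~\ref{diagmain} with $\theta=\theta_0$ to each factor of the square gives $\la^{1/3+2\e_0/3}$ after summing in $\nu$ with \eqref{a.33}, which is what \eqref{star} says.

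\textbf{On $\Upsilon_j$.} Here you have misplaced the tool. The paper proves \eqref{star1} by the \emph{same} $L^6$ machinery: factor the bilinear sum over $\nu_i\in\tau^\theta_{\mu_i}$ into $\theta$-scale blocks $A^\theta_{\tilde\mu_i}$, apply H\"older $\|FG\|_{L^3}\le\|F\|_{L^6}\|G\|_{L^6}$, and use Proposition~\ref{diagmain} with $\theta=2^j$ on each factor, giving $\la^{2/3+\e}2^{2j/3}$ after the almost-orthogonal $\mu$-sum. The bilinear oscillatory integral estimates of Lee/H\"ormander play no role in Lemma~\ref{diag1}; they enter only in Lemma~\ref{leelemma}, which is the companion $L^2$ estimate \eqref{5.7}. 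Your proposed bilinear $L^3$ bound $\la^{2/3}\theta^{2/3}$ per pair $(k_1,k_2)$ is not established (interpolation between the known bilinear $L^2$ gain $\theta^{-1/2}$ and Sogge's $L^4\times L^4\to L^4$ bound does not produce it), and, more fundamentally, the claim that the $(k_1,k_2)$-sum costs only $\la^\e$ ``by $L^2$ orthogonality in $t$'' cannot be invoked in $L^3_t$: for fixed $(k_1,k_2)$ the product is a pure phase in $t$, so there is no $L^3$ orthogonality available; the paper avoids this entirely by keeping the $k$-sum inside each $L^6$ factor, where it is controlled by the trilinear number-theoretic count.

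In short: the spirit of reducing to a Bourgain-type $L^6$ estimate is right, but the proof must pass through a single linear $L^6$ bound for $\sum_k P_{k,\mu}f_k$ (Proposition~\ref{diagmain}) with its Fermi-coordinate phase analysis and lattice-point count, applied uniformly to both \eqref{star} and \eqref{star1}; the bilinear oscillatory integral estimates are a red herring for this lemma.
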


We also require the following lemma which is a consequence of the bilinear estimates proved by the authors in \cite{huang2024curvature} using
bilinear oscillatory integral estimates of Lee~\cite{LeeBilinear} and slightly simplified 
variants of the arguments in \cite{BlairSoggeRefined}, \cite{blair2015refined} and 
\cite{SBLog}.

\begin{lemma}\label{leelemma}  
 If $\Upsilon_j(f)$ is defined as in \eqref{5.4}, and, as above $\theta_0\ll 2^j\ll 1$ then for
all $\e>0$ we have
\begin{equation}\label{5.7}
\|\Upsilon_j(f)\|_{L^{2}_{t,x}(S^{2}\times [0,1])}
\lesssim_\e \la^{\e}2^{-j/2}\|f\|^2_{L^2(S^2)},
\end{equation}
assuming the conic support of $B(x,\xi)$ in \eqref{a11} as well as $\delta$, $\delta_0$ and $\delta_1$
in \eqref{a9} and \eqref{a6}
are sufficiently small.
\end{lemma}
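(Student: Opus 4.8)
The plan is to reduce Lemma~\ref{leelemma} to the fixed-frequency bilinear estimates of \cite{huang2024curvature} by using Plancherel's theorem in the time variable together with the arithmetic of the numbers $k^2+k$. Since each $H_kf$ is an eigenfunction of $P$ with eigenvalue $\sqrt{k(k+1)}$, the operators in \eqref{5.2a} satisfy $P_{k,\nu}(H_kf)=e^{it(k^2+k)}\beta(k/\la)\,\tilde\rho_kA^{\theta_0}_\nu H_kf$, so every summand of $\Upsilon_j(f)$ depends on $t$ only through the factor $e^{itn}$ with $n=(k_1^2+k_1)+(k_2^2+k_2)\in{\mathbb Z}$. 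Regrouping the $(k_1,k_2)$-sum in \eqref{5.4} according to the value of $n$, we may thus write $\Upsilon_j(f)=\sum_n e^{itn}\sum_{\{(k_1,k_2):\,(k_1^2+k_1)+(k_2^2+k_2)=n\}}F_{k_1,k_2}$, where $F_{k_1,k_2}$ is the $t$-independent spatial part of the corresponding summand. Using that $[0,1]\subset[0,2\pi]$ while $\{t\mapsto e^{itn}\}_{n\in{\mathbb Z}}$ is orthogonal on $[0,2\pi]$, we obtain
\[
\|\Upsilon_j(f)\|^2_{L^2_{t,x}(S^2\times[0,1])}\lesssim\sum_n\Bigl\|\sum_{\{(k_1,k_2):\,(k_1^2+k_1)+(k_2^2+k_2)=n\}}F_{k_1,k_2}\Bigr\|^2_{L^2_x(S^2)}.
\]

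Next I would invoke the classical sum-of-two-squares bound: since $4n+2=(2k_1+1)^2+(2k_2+1)^2$, the number of pairs $(k_1,k_2)$ with $k_1,k_2\in\supp\beta(\cdot/\la)$ contributing to a given $n$ is $O_\e(\la^\e)$ --- the same cardinality estimate used for $\#I_j$ in the proof of Lemma~\ref{mainL4} (see \cite{rankin1987grosswald}). Applying Cauchy--Schwarz in the inner sum and summing over $n$ (each pair $(k_1,k_2)$ belonging to exactly one $n$) reduces matters to the purely spatial estimate
\[
\|\Upsilon_j(f)\|^2_{L^2_{t,x}(S^2\times[0,1])}\lesssim_\e\la^\e\sum_{k_1,k_2}\|F_{k_1,k_2}\|^2_{L^2_x(S^2)}.
\]

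The heart of the proof is then the fixed-frequency bilinear bound
\[
\|F_{k_1,k_2}\|_{L^2_x(S^2)}\lesssim_\e\la^\e\,2^{-j/2}\,\|H_{k_1}f\|_{L^2(S^2)}\,\|H_{k_2}f\|_{L^2(S^2)},\qquad k_1,k_2\in\supp\beta(\cdot/\la),
\]
which is a minor variant of the bilinear estimate proved in \cite{huang2024curvature}. Granting it, summing in $k_1,k_2$ and using $\sum_k\|H_kf\|^2_{L^2}\le\|f\|^2_{L^2}$ gives $\|\Upsilon_j(f)\|^2_{L^2_{t,x}}\lesssim_\e\la^{3\e}\,2^{-j}\,\|f\|^4_{L^2}$, i.e.\ \eqref{5.7} after relabeling $\e$. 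To prove the fixed-frequency bound one works in the coordinate chart carrying the symbol of $B$, sets $\chi^\theta_\mu=\sum_{\nu\in\tau^\theta_\mu}A^{\theta_0}_\nu$ for the associated cutoffs to $\theta$-caps ($\theta=2^j$), and runs the standard scheme of \cite{TaoVargasVega} and \cite{LeeBilinear}: the products $(\tilde\rho_{k_1}\chi^\theta_{\mu_1}H_{k_1}f)(\tilde\rho_{k_2}\chi^\theta_{\mu_2}H_{k_2}f)$ over close pairs $\tau^\theta_{\mu_1}\sim\tau^\theta_{\mu_2}$ have essentially disjoint $x$-frequency supports, so the $L^2_x$-norm can be brought inside the $(\mu_1,\mu_2)$-sum; Lee's bilinear oscillatory integral estimate \cite{LeeBilinear} for the $\theta$-separated caps then produces the gain $2^{-j/2}$; and the almost-orthogonality \eqref{a.33} of the $A^{\theta_0}_\nu$, together with the fact that each cap has $O(1)$ close partners, disposes of the remaining $(\mu_1,\mu_2)$-sum. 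The smallness of $\delta$, $\delta_0$, $\delta_1$ in \eqref{a9}, \eqref{a6} and of the conic support of $B$ in \eqref{a11} is exactly what makes Lee's estimate applicable in this localized setting.

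\textbf{Main obstacle.} The Plancherel reduction and the arithmetic input are routine --- the latter already appears in the proof of Lemma~\ref{mainL4} --- so the substantive work is the uniform fixed-frequency bilinear bound for $F_{k_1,k_2}$: one must check that the oscillatory integral machinery of \cite{huang2024curvature} applies uniformly over all nearby pairs $k_1,k_2\approx\la$, with the $A^{\theta_0}_\nu$ reorganized into the Whitney family of $\theta$-caps. Since $|k_i-\sqrt{k_i(k_i+1)}|=O(1)$, on $H_{k_i}f$ the smoothed projection $\tilde\rho_{k_i}$ acts as $B$ up to the bounded scalar $c_{k_i}^{-1}$, so the relevant kernels are precisely those controlled in \cite{huang2024curvature}; what remains is the bookkeeping of the Whitney decomposition and the $x$-frequency disjointness, where the transversality furnished by the positive curvature of $S^2$ enters.
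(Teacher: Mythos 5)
Your proposal is correct and follows essentially the same route as the paper: orthogonality of $e^{itn}$ in $t$ plus the $O_\e(\la^\e)$ bound on representations of $n$ by $k_1^2+k_1+k_2^2+k_2$ (equivalently $4n+2$ as a sum of two odd squares) reduces matters to the fixed-frequency bilinear bound, which is then obtained from the Whitney cap decomposition, Lee's bilinear oscillatory integral estimate, and the almost-orthogonality \eqref{a.33}. The only cosmetic difference is that the paper handles the $(\mu_1,\mu_2)$-sum by Minkowski's inequality followed by Cauchy--Schwarz using the $O(1)$-close-partners property, rather than by the frequency-disjointness you invoke; either works.
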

Before giving the proofs of the lemmas, let us show how we can use them to prove \eqref{a13}.
Note that $\frac{3}{7}=\frac12\cdot\frac47+\frac13\cdot\frac37$. By \eqref{star1}, \eqref{5.7} and H\"older's inequality, we have 
\begin{equation}\label{rj}
\|\Upsilon_j(f)\|_{L^{\frac73}_{t,x}}
\lesssim_\e \la^{\frac2{7}+\e}\|f\|_2^2.
\end{equation}

On the other hand,
by \eqref{5.5}, we have 
\begin{equation}\label{starl}
\|\Upsilon_0(f)\|_{L^{2}_{t,x}}
\lesssim \sum_j\|\Upsilon_j(f)\|_{L^{2}_{t,x}}+\left\|\sum_{k\in \mathbb{N}}e^{itk^2}\beta_{0,k,\la}\tilde\rho_kH_kf\right\|^2_{L^{4}_{t,x}}.
\end{equation}
By \eqref{a12} and \eqref{Sn01} for $q=4$, we have 
\begin{equation}\label{starll}
\left\|\sum_{k\in \mathbb{N}}e^{it(k^2+k)}\beta_{0,k,\la}\tilde\rho_kH_kf\right\|^2_{L^{4}_{t,x}}\lesssim_\e \la^{\frac14+\e}\|f\|_2^2.
\end{equation}
This together with \eqref{5.7} implies 
\begin{equation}\label{starlll}
\|\Upsilon_0(f)\|_{L^{2}_{t,x}}
\lesssim_\e \la^{\frac14+\e}\|f\|_2^2.
\end{equation}
By \eqref{star}, \eqref{starlll} and H\"older's inequality, we have 
\begin{equation}\label{farq}
\|\Upsilon_0(f)\|_{L^{\frac73}_{t,x}}
\lesssim_\e \la^{\frac2{7}(1+\e_0)+\e}\|f\|_2^2.
\end{equation}

Thus, if we choose $\e_0$ sufficiently small such that $\e_0\ll\e$ and apply \eqref{5.5} once more, we obtain
\begin{equation}\label{main2}
\begin{aligned}
\left\|\sum_{k\in \mathbb{N}}e^{it(k^2+k)}\beta(k/\la)\tilde\rho_kH_kf\right\|_{L^{\frac{14}3}_{t,x}}&\lesssim \|\Upsilon_0(f)\|^{\frac12}_{L^{\frac73}_{t,x}}+\left(\sum_j\|\Upsilon_j(f)\|_{L^{\frac73}_{t,x}}\right)^{\frac12}+O(\la^{-N})\|f\|_2\\
&\lesssim_\e 
(\log\la)^{\frac12}\la^{\frac1{7}+2\e}\|f\|_2\lesssim_\e \la^{\frac1{7}+3\e}\|f\|_2,
\end{aligned}
\end{equation}
as desired.

Thus, it remains to prove Lemma~\ref{diag1} and Lemma~\ref{leelemma}.
\begin{proof}[Proof of Lemma~\ref{diag1}]
We first present the proof of \eqref{star1}.
To prove \eqref{star1},  recall that by \eqref{m11}, we have for a given $\theta=2^j\ge \theta_0$, 
\begin{equation}\label{5.16a}
\tilde \rho_k A^{\theta_0}_\nu h
=\sum_{\tilde \mu \in \theta\cdot {\mathbb Z}^{2}} 
\tilde \rho_k A^{\theta}_{\tilde \mu}A^{\theta_0}_\nu h +O(\la^{-N}\|h\|_2).
\end{equation}
Thus, for a given 
pair of dyadic cubes $\tau^\theta_{\mu_1}, \tau^\theta_{\mu_2}$ with $\tau^\theta_{\mu_1}\sim \tau^\theta_{\mu_2}$
\begin{multline}\label{5.17a}
\sum_{k_1,k_2}
\sum_{\{(\nu_1, \nu_2)\in
\tau^\theta_{\mu_1}\times \tau^\theta_{ \mu_2}\}}
 P_{k_1,\nu_1}(H_{k_1}f)\cdot P_{k_2,\nu_2}(H_{k_1}f)
\\
=\sum_{k_1,k_2} \sum_{\substack{\tau^{\theta}_{\tilde \mu_1}\cap \overline{\tau}^\theta_{\mu_1}
\ne \emptyset \\ \tau^{\theta}_{\tilde \mu_2}\cap \overline{\tau}^\theta_{\mu_2}\ne \emptyset}}
 P_{k_1, \tilde \mu_1}(\sum_{\nu_1\in \tau^\theta_{\mu_1}} A^{\theta_0}_{\nu_1}H_{k_1}f) \cdot P_{k_2, \tilde \mu_2}(\sum_{\nu_2\in \tau^\theta_{\mu_2}}A^{\theta_0}_{\nu_2}H_{k_2}f)
+O(\la^{-N} \|f\|_2^2),
\end{multline}
where
\begin{equation}\label{munu}
\begin{aligned}
P_{k,\mu}f=e^{it(k^2+k)}\beta_{0,k,\la}\tilde\rho_{k} A^{\theta}_{ \mu}f .
\end{aligned}
\end{equation}
Here
$\overline{\tau}^\theta_{\mu_1}$ and $\overline{\tau}^\theta_{\mu_2}$ are cubes with the same centers but 10 times the side length
of $\tau^\theta_{\mu_1}$ and $\tau^\theta_{\mu_2}$, respectively.
We obtain \eqref{5.17a} from the fact that the product of the symbol of $A^{\theta}_{\tilde \mu}$ and $A^{\theta_0}_\nu$
vanishes if $\tau^{\theta}_{\tilde \mu}\cap \overline{\tau}^\theta_\mu = \emptyset$ and $\nu\in \tau^\theta_\mu$.
 Note also that for the fixed pair $\tau_{\mu_1}^\theta\sim \tau^\theta_{\mu_2}$ of $\theta$-cubes there are only $O(1)$
summands involving $\tilde \mu_1$ and $\tilde \mu_2$ in the right side of \eqref{5.17a}.

Based on this we claim that we would have \eqref{star1} if we could prove the following key result
\begin{proposition}\label{diagmain}
If $P_{k,  \mu}$ is defined as in \eqref{munu} with $ \mu \in \theta\cdot {\mathbb Z}^{2}$ and  $\theta=2^j$, then for all $j$ such that $\theta_0\le2^j\ll1$, we have 
\begin{equation}\label{dmain}
        \left\|\sum_{k}P_{k,  \mu}f_k\right\|_{L^{6}_{t,x}(S^{2}\times [0,1])}\lesssim \la^{\frac1{3}+\e}2^{\frac{j}3}(\sum_k\|f_k\|^2_{L^2(S^2)})^{\frac12}
\end{equation}
\end{proposition}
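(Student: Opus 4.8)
The plan is to deduce \eqref{dmain} from Bourgain's $L^6$ Strichartz estimate on the one-dimensional torus \cite{bourgain1993fourier},
\[
\bigl\|{\textstyle\sum_k}\, a_k\, e^{i(kx+k^2t)}\bigr\|_{L^6(\T\times[0,1])}\le C_\e\,\la^{\e}\bigl({\textstyle\sum_k}|a_k|^2\bigr)^{1/2},\qquad k\approx\la ,
\]
by transferring the microlocalized sum on $S^2$ to a product torus and then diagonalizing in the direction transverse to the geodesic $\gamma_\mu$ that carries the tube $A^\theta_\mu$.

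First I would use \eqref{m5}: the symbol of $A^{\theta}_\mu$ is supported in a $C_0\theta$ neighborhood of $\gamma_\mu$ in $S^*S^2$, so, rotating so that $\gamma_\mu$ is the equator and $A^\theta_\mu$ restricts directions to within $C_0\theta$ of the equatorial one, the operator $\tilde\rho_kA^\theta_\mu$ produces, modulo $O(\la^{-N})$ errors, a superposition of the $\approx R:=\la\theta^2$ spherical harmonics $Y_k^m$ with $0\le k-m=:r\lesssim R$. In coordinates $(x_1,x_2)=(\mathrm{longitude},\mathrm{latitude})$ these equal $c_{k,r}e^{i(k-r)x_1}h_r(\sqrt k\,x_2)$ up to lower order terms, with $h_r$ the $r$-th $L^2$-normalized Hermite function; rescaling $x_2=\theta y$ and writing $\chi_r(y):=h_r(\sqrt R\,y)$, the sum in \eqref{dmain} becomes, modulo negligible errors, the function
\[
\mathcal F(t,x_1,y)=\sum_{0\le r\lesssim R}\chi_r(y)\,G_r(t,x_1),\qquad G_r(t,x_1):=\sum_k c_{k,r}\,e^{it(k^2+k)+i(k-r)x_1},
\]
on $\T_{x_1}\times\T_y\times[0,1]_t$. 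A change of variables gives $\|\sum_k P_{k,\mu}f_k\|_{L^6(S^2\times[0,1])}\approx\theta^{1/6}\|\mathcal F\|_{L^6(\T^2\times[0,1])}$, and likewise $\theta\|\tilde g_k\|_{L^2(dx_1dy)}^2\approx\|\beta(k/\la)\tilde\rho_kA^\theta_\mu f_k\|_{L^2(S^2)}^2\lesssim\|f_k\|_{L^2}^2$ where $\tilde g_k(x_1,y)=\sum_r c_{k,r}e^{i(k-r)x_1}\chi_r(y)$. Hence it suffices to prove
\[
\|\mathcal F\|_{L^6(\T^2\times[0,1])}\le C_\e\,\la^{\e}\,R^{1/3}\bigl({\textstyle\sum_k}\|\tilde g_k\|_{L^2(dx_1dy)}^2\bigr)^{1/2}.
\]

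Now each $G_r$, after an elementary shear in $x_1$ absorbing the term linear in $k$, is a one-dimensional torus Schr\"odinger solution $\sum_k c_{k,r}e^{i(k^2t+kx_1)}$, so Bourgain's estimate yields $\|G_r\|_{L^6(\T\times[0,1])}\le C_\e\la^{\e}(\sum_k|c_{k,r}|^2)^{1/2}$ \emph{with no derivative loss}, while $\|\chi_r\|_{L^6(dy)}\lesssim R^{-1/12}$ by a rescaled Hermite $L^6$ bound. The remaining --- and essential --- point is to reassemble the $\approx R$ pieces $\chi_r(y)G_r(t,x_1)$. One exploits that the $G_r$ have pairwise disjoint space-time Fourier supports (since $k\mapsto k^2+k$ is injective, the frequency $(k^2+k,\,k-r)$ determines $(k,r)$), together with the $L^2(dy)$-orthogonality of the $\chi_r$, their $y$-frequency localization to $\lesssim\sqrt{Rr}$, and the $L^p(\R)$-bounds and triple overlaps of Hermite functions. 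Expanding $\|\mathcal F\|_6^6=\|\mathcal F^3\|_2^2$, applying Plancherel and Cauchy--Schwarz in frequency, and counting the number of representations via the classical bound $O_\e(\la^\e)$ for lattice points on a circle --- equivalently, for $6$-tuples with prescribed $\sum_i k_i$ and $\sum_i k_i^2$, the same arithmetic input driving Bourgain's flat argument in the $(t,x_1)$-variables --- one reaches an $\ell^2$-type reassembly, which comfortably beats the target power $R^{1/3}$.

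The main obstacle is precisely this last step. Neither the triangle inequality in $r$ nor a crude bi-orthogonality (C\'ordoba) argument by itself reaches $R^{1/3}$; one must run Bourgain's circle-method-free counting in $(t,x_1)$ \emph{simultaneously} with the transverse $\ell^2$-orthogonality and the sharp sizes and triple overlaps $\|\chi_{r_1}\chi_{r_2}\chi_{r_3}\|_{L^2(dy)}$ of the Hermite profiles (including their turning-point behavior), so that the $\approx R$ transverse modes contribute in an $\ell^2$ rather than $\ell^1$ fashion. A secondary technical point is to control the errors dropped in the first step --- the curvature terms, which perturb the relevant symbol by $O(\la R)$, of the same order as the $O(\la R)$ thickening of the parabola that supports $\widehat{\mathcal F}$, and the fact that $\tilde\rho_k$ is not a sharp spectral projector --- which is where the freedom to take $\delta,\delta_0,\delta_1$ in \eqref{a9}--\eqref{a6} and $\e_0=\e_0(\e)$ in $\theta_0=\la^{-1/2+\e_0}$ sufficiently small enters. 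For a general Zoll surface one replaces the explicit spherical harmonics and Hermite profiles by a parametrix for the transverse ordinary differential operator along $\gamma_\mu$, with the same quantitative features.
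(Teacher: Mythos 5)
There is a genuine gap: the step you yourself flag as ``the main obstacle'' --- reassembling the $\approx R=\la\theta^2$ transverse Hermite modes $\chi_r(y)G_r(t,x_1)$ in an $\ell^2$ fashion so as to land exactly on $R^{1/3}$ --- is never carried out. You list the ingredients one ``must'' combine (disjoint Fourier supports, $L^2(dy)$-orthogonality, triple overlaps $\|\chi_{r_1}\chi_{r_2}\chi_{r_3}\|_{L^2}$, turning-point behavior, the circle count), but the assertion that this ``comfortably beats the target power $R^{1/3}$'' is unsubstantiated, and it is precisely here that the whole content of the proposition lives: a naive triangle inequality in $r$ loses a full power of $R$, and the exponent $R^{1/3}$ is exactly what Cauchy--Schwarz over a set of admissible frequency triples of cardinality $\la^{1+\e}\theta^2$ produces, so any imprecision in the overlap/counting bookkeeping destroys the result. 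In addition, the preliminary reduction is itself delicate: $\tilde\rho_k=B\circ\rho(k-P)$ is a smoothed, microlocalized operator rather than a sharp projector onto highest-weight harmonics, so the claim that $\tilde\rho_kA^\theta_\mu$ produces, modulo $O(\la^{-N})$, only the modes $k-m\lesssim\la\theta^2$ with the stated separated form needs proof, and the uniform bound $\|\chi_r\|_{L^6(dy)}\lesssim R^{-1/12}$ is not correct uniformly in $0\le r\lesssim R$ (Hermite $L^6$ norms degrade near the turning point, which is one of the regimes you would have to handle).

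For comparison, the paper avoids the torus transfer and the Hermite analysis entirely. It writes $\|\sum_kP_{k,\mu}f_k\|_{L^6}^3=\|\sum_{k_1,k_2,k_3}P_{k_1,\mu}f_{k_1}P_{k_2,\mu}f_{k_2}P_{k_3,\mu}f_{k_3}\|_{L^2_{t,x}}$ and establishes two orthogonality relations directly on $S^2$: exact orthogonality in $t$ forces $\sum_i(k_i^2+k_i)$ to agree for the two triples, while integration by parts in the geodesic coordinate $x_2$, using the kernel representation $K^\theta_{k,\mu}=k^{1/2}e^{ikd_g(x,y)}a_\mu$ of Lemma~\ref{loclemma} and the fact that $d_g(x,y)-(x_2-y_2)=O(2^{2j})$ on the $O(2^j)$-tube, forces $k_1+k_2+k_3$ to be determined up to a window of length $\la2^{2j}$. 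This localizes the triple sum to the sets $S_{\ell_1,\ell_2}$ of \eqref{d5a}, whose cardinality is $\lesssim_\e\la^{1+\e}2^{2j}$ by the same lattice-point count you invoke; Cauchy--Schwarz then yields the factor $(\la^{1+\e}2^{2j})^{1/6}=\la^{1/6+\e/6}2^{j/3}$, and H\"older plus Sogge's bound $\|\tilde\rho_kf\|_{L^6_x}\lesssim\la^{1/6}\|f\|_2$ supplies the remaining $\la^{1/6}$. In other words, the transverse $\ell^2$ gain you are trying to extract from Hermite overlaps is obtained in the paper purely from the width $\la2^{2j}$ of the $x_2$-orthogonality window; no separation of variables is needed, and the argument applies verbatim on a general Zoll surface. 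I would recommend abandoning the explicit torus model and proving the almost-orthogonality statement \eqref{d5}--\eqref{d7} instead.
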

Before giving the proof of \eqref{dmain}, let us verify the claim.
We first note that if 
$h_1=\sum_{\nu_1\in \tau^\theta_{\mu_1}} A^{\theta_0}_{\nu_1} h$
and % \, \, \text{and } \, \,
$h_2=\sum_{\nu_2\in \tau^\theta_{\mu_2}} A^{\theta_0}_{\nu_2} h,
$
then by  the almost orthogonality of the $A^{\theta_0}_\nu$ operators, 
$$\|h_1\|_2^2 \lesssim \sum_{\nu_1 \in \tau^\theta_{\mu_1}} \|A^{\theta_0}_{\nu_1} h\|_2^2
\quad \text{and } \, \, 
\|h_2\|_2^2 \lesssim \sum_{\nu_2 \in \tau^\theta_{\mu_2}} \|A^{\theta_0}_{\nu_2} h\|_2^2.
$$

Thus, by \eqref{5.17a}, \eqref{dmain} and 
Minkowski’s inequality, we obtain, up to the negligible  $O(\la^{-N})\|f\|^2_2$ errors, that
\begin{equation}\nonumber
\begin{aligned}
    &\|\Upsilon_j(f)\|_{L^{3}_{t,x}}
    \\
    &\le  \sum_{(\mu_1,\mu_2): \tau^\theta_{\mu_1} \sim \tau^\theta_{\mu_2}} \sum_{\substack{\tau^{\theta}_{\tilde \mu_1}\cap \overline{\tau}^\theta_{\mu_1}
\ne \emptyset \\ \tau^{\theta}_{\tilde \mu_2}\cap \overline{\tau}^\theta_{\mu_2}\ne \emptyset}}\left\|\sum_{k_1,k_2} P_{k_1, \tilde \mu_1}(\sum_{\nu_1\in \tau^\theta_{\mu_1}} A^{\theta_0}_{\nu_1}H_{k_1}f)  P_{k_2, \tilde \mu_2}(\sum_{\nu_2\in \tau^\theta_{\mu_2}} A^{\theta_0}_{\nu_2}H_{k_2}f)\right\|_{L^{3}_{t,x}} \\
     &\le \sum_{(\mu_1,\mu_2): \tau^\theta_{\mu_1} \sim \tau^\theta_{\mu_2}} \sum_{\substack{\tau^{\theta}_{\tilde \mu_1}\cap \overline{\tau}^\theta_{\mu_1}
\ne \emptyset \\ \tau^{\theta}_{\tilde \mu_2}\cap \overline{\tau}^\theta_{\mu_2}\ne \emptyset}}\left\|\sum_{k_1}P_{k_1, \tilde \mu_1}(\sum_{\nu_1\in \tau^\theta_{\mu_1}} A^{\theta_0}_{\nu_1}H_{k_1}f)  \right\|_{L^{6}_{t,x}} \cdot \left\|\sum_{k_2}P_{k_2, \tilde \mu_2}(\sum_{\nu_2\in \tau^\theta_{\mu_2}} A^{\theta_0}_{\nu_2}H_{k_2}f)\right\|_{L^{6}_{t,x}}\\
&\lesssim_\e  \la^{\frac2{3}+\e}2^{\frac{2j}3}   \sum_{(\mu_1,\mu_2): \tau^\theta_{\mu_1} \sim \tau^\theta_{\mu_2}}
\bigl(\sum_{\nu_1\in \tau^\theta_{\mu_1}}\sum_{k_1}\|A_{\nu_1}^{\theta_0}H_{k_1}f\|_2^2\bigr)^{1/2}
\bigl(\sum_{\nu_2\in \tau^\theta_{\mu_2}}\sum_{k_2}\|A_{\nu_2}^{\theta_0}H_{k_2}f\|_2^2\bigr)^{1/2} 
\\
&\lesssim  \la^{\frac2{3}+\e}2^{\frac{2j}3} \left(\sum_{\mu_1} \sum_{\nu_1\in \tau^\theta_{\mu_1}}\sum_{k_1}\|A_{\nu_1}^{\theta_0}H_{k_1}f\|_2^2\right)^{\frac12}\left(\sum_{\mu_2} \sum_{\nu_2\in \tau^\theta_{\mu_2}}\sum_{k_2}\|A_{\nu_2}^{\theta_0}H_{k_2}f\|_2^2\right)^{\frac12}
\\
&\lesssim  \la^{\frac2{3}+\e}2^{\frac{2j}3}  \|f\|^2_2. 
\end{aligned}
\end{equation}
In the above we used the fact that for each $\tau^\theta_\mu$ there are $O(1)$ cubes $\tau^{\theta}_{\tilde \mu}$ with
$\tau^{\theta}_{\tilde \mu}\cap \overline{\tau}^\theta_\mu\ne \emptyset$ and for each $\tau^\theta_{\mu_1}$ there are $O(1)$ $\tau^\theta_{\mu_2}$ with
$\tau^\theta_{\mu_1} \sim \tau^\theta_{\mu_2}$, and we also used \eqref{a.33}.

This completes the proof of \eqref{star1}. The proof of \eqref{star} follows similarly. To see this, note that by applying \eqref{dmain} when $\theta=\theta_0$ and using  Minkowski’s inequality
\begin{equation}\label{d1}
\begin{aligned}
    \|\Upsilon_0(f)\|_{L^{3}_{t,x}}
    &\le  \sum_{(\nu_1,\nu_2)\in \Xi_{\theta_0}} \left\|\sum_{k_1,k_2}P_{k_1,\nu_1}(H_{k_1}f) P_{k_2,\nu_2}(H_{k_2}f) \right\|_{L^{3}_{t,x}} \\
     &\le  \sum_{(\nu_1,\nu_2)\in \Xi_{\theta_0}} \left\|\sum_{k_1}P_{k_1,\nu_1}(H_{k_1}f)  \right\|_{L^{6}_{t,x}} \cdot \left\|\sum_{k_2}P_{k_2,\nu_2}(H_{k_2}f) \right\|_{L^{6}_{t,x}} \\
          &\le  C\sum_\nu \left\|\sum_{k}P_{k,\nu}(H_{k}f) \right\|^2_{L^{6}_{t,x}}
\end{aligned}
\end{equation}
In the above we used the fact that for each $\nu_1$ there are $O(1)$ values of $\nu_2$ such that $(\nu_1,\nu_2)\in \Xi_{\theta_0}$. Note that   by \eqref{m11} and the fact that the symbols of $A^{\theta_0}_\nu$ operators have bounded overlap, we have 
\begin{equation}\nonumber
\tilde \rho_k A^{\theta_0}_\nu h
=\sum_{|\nu'-\nu|\lesssim \theta_0} 
\tilde \rho_k A^{\theta_0}_{\nu'}A^{\theta_0}_\nu h +O(\la^{-N}\|h\|_2).
\end{equation}
Therefore, by \eqref{dmain} and \eqref{a.33}
\begin{equation}\nonumber
\begin{aligned}
 \sum_\nu  \left\|\sum_{k}P_{k,\nu}(H_{k}f) \right\|^2_{L^{6}_{t,x}}&\le \sum_\nu  \left\|\sum_{k,\nu'}P_{k,\nu'}(A^{\theta_0}_\nu H_{k}f) \right\|^2_{L^{6}_{t,x}}+O(\la^{-N}\|f\|_2)\\
&\lesssim\la^{\frac1{3}+\e}\la^{\frac{2\e_0}3}\sum_\nu\sum_k\left\|A^{\theta_0}_{\nu} H_{k}f\right\|^2_{2}++O(\la^{-N}\|f\|_2)\\
&\lesssim\la^{\frac1{3}+\e}\la^{\frac{2\e_0}3}\|f\|^2_{2}. 
\end{aligned}
\end{equation}

From this we conclude that, in order to conclude the proof of Lemma~\ref{diag1}, we just need to prove Proposition~\ref{diagmain}.

\noindent\textbf{Proof of Proposition~\ref{diagmain}.}

To prove \eqref{dmain}, we shall follow an idea from Bourgain's~\cite{bourgain1993fourier} proof of Strichartz estimates for the Schr\"odinger equation on $\mathbb{T}$. First, write
\begin{equation}\label{d4}
    \begin{aligned}
\left\|\sum_{k}P_{k,  \mu}f_k\right\|^3_{L^{6}_{t,x}}
=\left\|\sum_{k_1,k_2,k_3}P_{k_1,\mu}f_{k_1}P_{k_2,\mu}f_{k_2}P_{k_3,\mu}f_{k_3}\right\|_{L^{2}_{t,x}}.
    \end{aligned}
\end{equation}
The main step is to prove
\begin{equation}\label{d5}
    \begin{aligned}
&\left\|\sum_{k_1,k_2,k_3}P_{k_1,\mu}f_{k_1}P_{k_2,\mu}f_{k_2}P_{k_3,\mu}f_{k_3}\right\|^2_{L^{2}_{t,x}}\\
&\lesssim\sum_{\ell_1, \ell_2\in \mathbb{Z}}\left\|\sum_{(k_1,k_2,k_3)\in S_{\ell_1,\ell_2}}P_{k_1,\mu}f_{k_1}P_{k_2,\mu}f_{k_2}P_{k_3, \mu}f_{k_3}\right\|^2_{L^{2}_{t,x}}+ O(\la^{-N} (\sum_k\|f_k\|^2_{2})^{3}),
    \end{aligned}
\end{equation}
where 
\begin{multline}\label{d5a}
    S_{\ell_1,\ell_2}=\{(k_1,k_2,k_3): k_1^2+k_1+k_2^2+k_2+k_3^2+k_3=\ell_1, \\ \,\,k_1+k_2+k_3\in [\ell_2\la 2^{2j}, (\ell_2+1)\la2^{2j}\}.
\end{multline}

To prove this, write
\begin{equation}\label{d6}
    \begin{aligned}
&\left\|\sum_{k_1,k_2,k_3}P_{k_1,\mu}f_{k_1}P_{k_2,\mu}f_{k_2}P_{k_3,\mu}f_{k_3}\right\|^2_{L^{2}_{t,x}}\\
&=\sum_{k_1,\cdots,k_6}\int \int P_{k_1,\mu}f_{k_1}P_{k_2,\mu}f_{k_2}P_{k_3,\mu}f_{k_3} \overline{P_{k_4,\mu}f_{k_4}P_{k_5,\mu}f_{k_5}P_{k_6,\mu}f_{k_6}} dtdx.
    \end{aligned}
\end{equation}
By taking the $dt$ integral, it is not hard to see that the above integral is nonzero only if $k_1^2+k_1+k_2^2+k_2+k_3^2+k_3=k_4^2+k_4+k_5^2+k_5+k_6^2+k_6$, which naturally gives rise to the first constraint in \eqref{d5a}. Thus to prove \eqref{d5}, it suffices to show 
\begin{multline}\label{d7}
 \int P_{k_1,\mu}f_{k_1}P_{k_2,\mu}f_{k_2}P_{k_3,\mu}f_{k_3} \overline{P_{k_4,\mu}f_{k_4}P_{k_5,\mu}f_{k_5}P_{k_6,\mu}f_{k_6}} dx=O(\la^{-N} \|f_{k_1}\|_{2}\cdots\|f_{k_6}\|_{2})\\
 \text{if}\,\,\, \left|k_1+k_2+k_3-k_4-k_5-k_6\right|\ge C\la2^{2j}.
\end{multline}

To see this,  let us first collect some facts about the kernels of the operators $\tilde \rho_k A^{\theta}_{\mu}$ in \eqref{munu} that we shall use.
As we shall shortly see they are highly concentrated near a specific geodesic in $S^2$.  Recall that $A^{\theta}_{\mu}(x,D)$ is a 
``directional operator'' with $\mu\in \theta\cdot {\mathbb Z}^{2}$ and, by \eqref{m5}, symbol $A^{\theta}_\mu(x,\xi)$ highly
concentrated near a unit speed geodesic
\begin{equation}\label{5.22}
\gamma_\mu(s)=(x_\mu(s),\xi_\mu(s))\in S^*\Omega.
\end{equation}
Since $\gamma_\mu$ is of unit speed, we have $d_g(x_\mu(s),x_\mu(s'))=|s-s'|$.

To state the properties of the kernels $K^{\theta}_{k,\mu}(x,y)$ of the operators $\tilde \rho_k A^{\theta}_{\mu}$, as in 
earlier works, it is convenient to work in Fermi normal coordinates about the spatial geodesic $\overline{\gamma}_\mu =\{x_\mu(s)\}$.
In these coordinates, the geodesic becomes part of the last coordinate axis, i.e., $(0, s)$ in $\R^2$, with, as in the earlier
construction of the symbols of the $A^{\theta}_\mu$, $s$ being close to $0$.  For the remainder of this section we shall let
$x=(x_1, x_2)$ denote these Fermi normal coordinates about our geodesic $\overline{\gamma}_\mu$ associated with
$A^{\theta}_\mu$.  We then have
\begin{equation}\label{5.23}
d_g((0, x_2),(0, y_2))=|x_2-y_2|,
\end{equation}
and, moreover, on $\overline{\gamma}_\mu$ we have that the metric is just $g_{jk}(x)=\delta^k_j$ if $x=(0, x_2)$, and, 
additionally, all of the Christoffel symbols vanish there as well.

% It also follows that the symbols $A^{\theta}_\mu(x,\xi)$ of
% $A^{\theta}_\mu$ satisfy for some fixed $C_1$
% \begin{equation}\label{k1}
% |\partial^k_{x_2}\partial^\alpha_{x_1}\partial^\ell_{\xi_2}\partial^\beta_{\xi_1}A^{\theta}_\mu(x,\xi)|
% \lesssim \theta^{-|\alpha|-|\beta|}\la^{-|\beta|-\ell}.
% \end{equation}

We need to following lemma which is a consequence of Lemma~A.4 in \cite{huang2024curvature}.
\begin{lemma}\label{loclemma}  
Fix $0<\delta\ll \tfrac12 \text{Inj }M$ and let $K^{\theta}_{k,\mu}$ be the kernel of $\tilde\rho_{k}A^{\theta}_{\mu}$,  we have for $k\approx \la$ and $\theta=2^{j}$ with $\la^{-\frac12+\e_0}\le2^j\ll1$
\begin{equation}\label{k2}
K^{\theta}_{k,\mu}(x,y)
=k^{\frac{1}2} e^{ik d_g(x,y)}
a_\mu(k; x,y) +O(\la^{-N})
\end{equation}
where
\begin{equation}\label{k3}
\bigl| \, \bigl(\tfrac\partial{\partial x_2})^{m_1}
\bigl(\tfrac\partial{\partial y_2})^{m_2} 
D^\beta_{x,y}a_\mu\, \bigr|
\le C_{m_1,m_2,\beta} \, 2^{-|\beta|j}.
\end{equation}
Furthermore, for small $\theta$  there is a 
constant $C_0$ so that the above $O(\la^{-N})$ errors
can be chosen so that the amplitudes have the
following support properties:
\begin{equation}\label{4'} a_\mu(k;x,y)=0 \, \, \,
\text{if } \, \, |x_1|
+ |y_1|\ge C_02^{j},
\end{equation}
as well as, for small $\delta, \delta_0>0$ as in
\eqref{a6}
\begin{equation}\label{k5}
a_\mu(k;x,y)=0 \, \, \,
\text{if } \, \, |d_g(x,y)-\delta|\ge 2\delta_0\delta,  
 \, \, \text{or } \, \, x_2-y_2<0.
\end{equation}
\end{lemma}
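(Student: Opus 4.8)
The plan is to reduce everything to the Fourier integral structure of the half-wave operator $e^{isP}$ on the surface $M^2$ and then to invoke Lemma~A.4 of \cite{huang2024curvature}, which already encodes the relevant stationary phase analysis. First I would write $\tilde\rho_k = B\circ\rho_k$ with $\rho_k=\rho(k-P)$, use $\hat\rho\ge0$ supported in $\delta\cdot[1-\delta_0,1+\delta_0]$, and Euler's formula to express
\begin{equation}\label{halfwave}
\rho_k = \frac1{2\pi}\int \hat\rho(s)\, e^{iks}\, e^{-isP}\, ds,
\end{equation}
so that the kernel of $\tilde\rho_k A^\theta_\mu$ becomes an oscillatory integral in $s$ whose phase involves $e^{iks}$ times the kernel of $B\circ e^{-isP}\circ A^\theta_\mu$. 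By the Lax parametrix construction (valid here since $|s|\le\delta\ll\mathrm{Inj}\,M$ on the support of $\hat\rho$), $e^{-isP}$ is a Fourier integral operator whose kernel, paired with the symbol $A^\theta_\mu(y,\eta)$, is of the form $\int e^{i(\varphi(x,s,\eta)-\langle y,\eta\rangle)}\,q(s,x,y,\eta)\,d\eta$ with $\varphi$ the geodesic-flow generating function; composing with $B\in S^0$ only modifies the amplitude. Carrying out the $d\eta$ integral first by stationary phase in two dimensions produces the factor $k^{1/2}$ (from a $k^{(d-1)/2}=k^{1/2}$ Jacobian in dimension $d=2$) together with the phase $e^{ik d_g(x,y)}$ once the $s$-integral is also evaluated by stationary phase; the critical point in $s$ is $s=d_g(x,y)$, which by the support of $\hat\rho$ forces $|d_g(x,y)-\delta|\le 2\delta_0\delta$, giving the first half of \eqref{k5}, and the orientation of the geodesic flow gives the $x_2-y_2<0$ exclusion in \eqref{k5}.

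The next step is to track the derivative bounds \eqref{k3}. Since we work in Fermi normal coordinates about $\overline\gamma_\mu$, where $d_g((0,x_2),(0,y_2))=|x_2-y_2|$ and all Christoffel symbols vanish on the axis, differentiation in $x_2,y_2$ of the remaining amplitude costs nothing, while the symbol estimates \eqref{k1} for $A^\theta_\mu$ — namely $\theta^{-1}$ per $x_1$- or $\xi_1$-derivative and $\la^{-1}$ per $\xi_1$- or $\xi_2$-derivative — feed through the stationary phase expansion to yield the stated $2^{-|\beta|j}=\theta^{-|\beta|}$ loss in the remaining $(x_1,y_1)$ directions after accounting for the $\la$-rescaling of the $\xi$ variables. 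The support property \eqref{4'}, $|x_1|+|y_1|\gtrsim 2^j$ forcing $a_\nu=0$, comes from \eqref{m5}: the symbol $A^\theta_\mu(x,\xi)$ lives within $C_0\theta$ of the geodesic $\gamma_\mu$, and propagating by the (short-time) geodesic flow moves points by $O(\delta)$ along the geodesic while preserving the $O(\theta)$ transverse localization, so in Fermi coordinates the $x_1$-variable — and, by the non-degeneracy of the canonical relation for $|s|$ small, the $y_1$-variable — stay within $O(2^j)$ of the axis; points outside this tube contribute only $O(\la^{-N})$ after nonstationary phase (here one integrates by parts in $\eta$, each step gaining a power of $(\la\theta^2)^{-1}=\la^{-2\e_0}$, which is acceptable since $\theta\ge\la^{-1/2+\e_0}$).

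The main obstacle is bookkeeping the two superimposed stationary phase integrations (in $\eta$ and then in $s$) while keeping the amplitude estimates uniform in $\mu$ and in the dyadic scale $\theta=2^j$ throughout the full range $\la^{-1/2+\e_0}\le 2^j\ll1$; in particular one must check that the error terms in the asymptotic expansions are genuinely $O(\la^{-N})$ and not merely lower-order, which requires that the phase be non-degenerate with second-derivative bounded below uniformly — this is where the smallness of $\delta,\delta_0,\delta_1$ and of the conic support of $B$ enters, exactly as flagged in the statement. Since all of this is precisely the content of Lemma~A.4 of \cite{huang2024curvature} (the only change being the extra harmless factor $B\in S^0_{1,0}$ and the specialization to $d=2$ and the particular cutoff $A^\theta_\mu$ built from \eqref{qnusymbol}), I would present the argument as a short reduction to that lemma, verifying that our hypotheses match its hypotheses and that the claimed kernel bounds \eqref{k2}--\eqref{k5} are exactly its conclusions transcribed into the present notation, rather than reproving the stationary phase analysis from scratch.
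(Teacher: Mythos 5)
Your proposal is correct and follows essentially the same route as the paper: the paper's entire argument for Lemma~\ref{loclemma} is a one-line reduction to Lemma~A.4 of \cite{huang2024curvature}, noting only that the range must be extended from $\theta\ge 1/8$ to $\theta\ge\la^{-1/2+\e_0}$ and that the stationary phase arguments require $\e_0>0$ — which is exactly the point you flag (each integration by parts in $\eta$ gains a factor $(\la\theta^2)^{-1}=\la^{-2\e_0}$). The Lax-parametrix/stationary-phase outline you give is the substance of that cited lemma, and your bookkeeping of the $k^{1/2}$ factor, the $s=d_g(x,y)$ critical point, the support properties \eqref{4'}--\eqref{k5}, and the symbol estimates \eqref{k3} is consistent with it.
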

Note that in \cite{huang2024curvature}, the above lemma was proved for all $\theta\ge \la^{-\frac18}$. However, the same arguments apply for all $\theta\ge \la^{-\frac12+\e_0}$ for any fixed $\e_0>0$.  The stationary phase arguments used require $\e_0$ to be positive.
% Here the $k+\frac12$ in the phase function is just due to the definition of $\rho_k$ in \eqref{a5}.

Now let us describe some properties of the phase function
\begin{equation}\label{5.31} 
\varphi(x,y)=d_g(x,y)
\end{equation}
of our kernels in \eqref{k2}.  First, in addition
to \eqref{5.23}, since we are working in the 
above Fermi normal coordinates we have
\begin{equation}\label{5.32}
\partial\varphi/\partial x_1, \,
\partial\varphi/\partial y_1=0, \, \,
\text{if } \, \, x_1=y_1=0.
\end{equation}
Also, note that by \eqref{4'} and \eqref{k5}, if 
we assume $2^j\le \delta_0\delta/10$, then whenever the 
amplitude is nonzero, we have $x_2-y_2\approx \delta$. Consequently,
\begin{equation}\label{5.33}
\tilde \varphi(x,y)=\varphi(x,y)-(x_2-y_2)
\end{equation}
vanishes to second order when $x_1=y_1=0$ and the 
amplitude is nonzero.  This means 
\begin{equation}\label{5.34}
D^\beta_{x_2,y_2} 
\tilde \varphi(x_1,x_2, y_1,y_2)
=O_\beta(2^{2j}) \, \, \,\,
\text{if } \, \, |x_1|, |y_1|=O(2^j).
\end{equation}

Now we are ready to prove \eqref{d7}. By \eqref{k2}
\begin{equation}\label{d8}
    \begin{aligned}
      &\int  P_{k_1,\mu}f_{k_1}P_{k_2,\mu}f_{k_2}P_{k_3,\mu}f_{k_3} \overline{P_{k_4,\mu}f_{k_4}P_{k_5,\mu}f_{k_5}P_{k_6,\mu}f_{k_6}} dx\\
 &=c\int_{y_{1},\cdots, y_{6}\in \R^2}\int_{x\in \R^2}  e^{i(k_1d_g(x,y_1)+k_2d_g(x,y_2)+k_3d_g(x,y_3)-k_4d_g(x,y_4)-k_5d_g(x,y_5)-k_6d_g(x,y_6)) }\\
 &\qquad\cdot a_\mu(k_1; x,y_1)f_{k_1}(y_1)a_\mu(k_2; x,y_2)f_{k_2}(y_2)a_\mu(k_3; x,y_3)f_{k_3}(y_3)\\
 &\qquad\cdot\overline{a_\mu(k_4; x,y_4)f_{k_4}(y_4)a_\mu(k_5; x,y_5)f_{k_5}(y_5)a_\mu(k_6; x,y_6)f_{k_6}(y_6)} dx dy_1\cdots dy_6 \\
 &\qquad+O(\la^{-N} \|f_{k_1}\|_{2}\cdots\|f_{k_6}\|_{2}), 
    \end{aligned}
\end{equation}
where the constant $c\approx \la^3$ depends on $t, k_1,\dots, k_6$ but is independent of $x$.
By \eqref{5.33}, \eqref{5.34} and the fact that $|x_1|, |y_1|\lesssim 2^j$ due to \eqref{4'}, we obtain, for $x=(x_1,x_2)$,
\begin{multline*}
    |\partial_{x_2}(k_1d_g(x,y_1)+k_2d_g(x,y_2)+k_3d_g(x,y_3)-k_4d_g(x,y_4)-k_5d_g(x,y_5)\\-k_6d_g(x,y_6))|
    \gtrsim \la2^{2j},\,\,\,
   \text{if}\,\,\, \left|k_1+k_2+k_3-k_4-k_5-k_6\right|\ge C\la2^{2j}. 
\end{multline*}
Moreover, \eqref{5.34} also implies
\begin{multline*}
    |\partial^n_{x_2}(k_1d_g(x,y_1)+k_2d_g(x,y_2)+k_3d_g(x,y_3)-k_4d_g(x,y_4)-\\k_5d_g(x,y_5)-k_6d_g(x,y_6))|
    \lesssim \la2^{2j} \,\,\,\,\forall\,n\ge 2.
\end{multline*}
Combining these bounds with \eqref{k3} and integrating by parts in $x_2$ yields \eqref{d7}.

Now we shall finish the proof of Proposition \ref{diagmain}. by \eqref{d5},
\begin{equation}\label{d9}
    \begin{aligned}
&\left\|\sum_{k}P_{k,\mu}f_k\right\|_{L^{6}_{t,x}}
=\left\|\sum_{k_1,k_2,k_3}P_{k_1,\mu}f_{k_1}P_{k_2,\mu}f_{k_2}P_{k_3,\mu}f_{k_3}\right\|^{\frac13}_{L^{2}_{t,x}}\\
&\quad\lesssim
\left(\sum_{\ell_1, \ell_2\in \mathbb{Z}}\left\|\sum_{(k_1,k_2,k_3)\in S_{\ell_1,\ell_2}}P_{k_1,\mu}f_{k_1}P_{k_2,\mu}f_{k_2}P_{k_3,\mu}f_{k_3}\right\|^2_{L^{2}_{t,x}}\right)^{\frac16}+  O(\la^{-N} (\sum_k\|f_k\|^2_{2})^{\frac16}).
    \end{aligned}
\end{equation}

Note that if $(k_1,k_2,k_3)\in S_{\ell_1,\ell_2}$,  $P_{k_1,\mu}f_{k_1}P_{k_2,\mu}f_{k_2}P_{k_3,\mu}f_{k_3}$ is nonzero only if $\ell_1\approx \la^2$. Moreover, for any fixed $\ell_1, \ell_2$ with $\ell_1\approx \la^2$, the number of choices of integers $k$ such that $k_1+k_2+k_3=k$ is $\la2^j$. For such a fixed $k$, it follows from classical number theory (see e.g., \cite{bourgain1993fourier}, (2.40)-(2.42)) that 
$$\#\{(k_1,k_2): k_1^2+k_1+k_2^2+k_2+(k-k_1-k_2)^2+k-k_1-k_2=\ell_1\}\lesssim_\e \la^\e.
$$
Consequently, $\# S_{\ell_1,\ell_2} \lesssim_\e \la^{1+\e}2^{2j}$. Thus, by  the Cauchy-Schwarz inequality,
%it is not hard to show 
\begin{equation}\label{d10}
    \begin{aligned}
&\left(\sum_{\ell_1, \ell_2\in \mathbb{Z}}\left\|\sum_{(k_1,k_2,k_3)\in S_{\ell_1,\ell_2}}P_{k_1,\mu}f_{k_1}P_{k_2,\mu}f_{k_2}P_{k_3,\mu}f_{k_3}\right\|^2_{L^{2}_{t,x}}\right)^{\frac16}\\
&\lesssim_\e \la^{\frac{1+\e}6}2^{j/3}\left(\sum_{\ell_1, \ell_2\in \mathbb{Z}}\sum_{(k_1,k_2,k_3)\in S_{\ell_1,\ell_2}}\left\|P_{k_1,\mu}f_{k_1}P_{k_2,\mu}f_{k_2}P_{k_3,\mu}f_{k_3}\right\|^2_{L^{2}_{t,x}}\right)^{\frac16}\\
&\lesssim_\e \la^{\frac{1+\e}6}2^{j/3}\left(\sum_{\ell_1, \ell_2\in \mathbb{Z}}\sum_{(k_1,k_2,k_3)\in S_{\ell_1,\ell_2}}\left\|P_{k_1,\mu}f_{k_1}\right\|^2_{L^{6}_{t,x}}\left\|P_{k_2,\mu}f_{k_2}\right\|^2_{L^{6}_{t,x}}\left\|P_{k_3,\mu}f_{k_3}\right\|^2_{L^{6}_{t,x}}\right)^{\frac16}\\
&= \la^{\frac{1+\e}6}2^{j/3}\left(\sum_k\left\|P_{k,\mu}f_{k}\right\|^2_{L^{6}_{t,x}}\right)^{\frac12}.
    \end{aligned}
\end{equation}
Note that for fixed $k\approx \la$, it follows from the classical result of Sogge~\cite{sogge881} that 
\begin{equation}
   \|\tilde \rho_kf\|_{L_x^6} \lesssim \la^{\frac16} \|f\|_{L^2_x}
\end{equation}
Thus the right side of \eqref{d10} is bounded by 
\begin{equation}\label{d11}
    \begin{aligned}
\la^{\frac{1+\e}6}2^{j/3}\left(\sum_k\left\|P_{k,\mu}f_{k}\right\|^2_{L^{6}_{t,x}}\right)^{\frac12}&\lesssim_\e  \la^{\frac13+\e}2^{j/3}\left(\sum_k\left\|A^{\theta}_{\mu} H_{k}f\right\|^2_{L^{2}_{x}}\right)^{\frac12} \\&\lesssim_\e  \la^{\frac13+\e}2^{j/3}(\sum_k\|f_k\|^2_{L^2})^{\frac12}.
    \end{aligned}
\end{equation}
In the  last inequality, 
we used \eqref{a.33a}.

By combining \eqref{d9}, \eqref{d10} and \eqref{d11}, we obtain \eqref{dmain}, which completes the proof of Proposition~\ref{diagmain}.
\end{proof}

\begin{proof}[Proof of Lemma~\ref{leelemma}]
    To prove \eqref{5.7}, we first claim that, it suffices to prove for any fixed $k_1,k_2\in \supp \beta(\, \cdot \, /\la)$ 
    \begin{equation}\label{a21}
\|T_{k_1,k_2}(h_1,h_2)\|_{L^{2}_{x}}
\lesssim_\e \la^{\e}2^{-j/2}\|h_1\|_2\|h_2\|_2,
\end{equation}
 where 
$$ T_{k_1,k_2}(h_1,h_2)=\sum_{\{(\mu_1, \mu_2): \, \tau^\theta_{\mu_1}
\sim \tau^\theta_{ \mu_2},\, \theta=2^j\}}
\sum_{\{(\nu_1, \nu_2)\in
\tau^\theta_{\mu_1}\times \tau^\theta_{ \mu_2}\}}\tilde\rho_{k_1}A^{\theta_0}_{\nu_1} h_1\tilde\rho_{k_2}A^{\theta_0}_{\nu_2} h_2.
$$

To verify the claim, note that by $L^2$ orthogonality in the $t$ variable, we have
\begin{equation}\label{a22}
    \begin{aligned}
\|&\Upsilon_j(f)\|^2_{L^{2}_{t,x}}\\
&=\sum_{\ell}\left\|\sum_{k_1^2+k_1+k_2^2+k_2=\ell}\beta(k_1/\la)\beta(k_2/\la)T_{k_1,k_2}(H_{k_1}f,H_{k_2}f)\right\|^2_{L^{2}_{x}}\\
&\lesssim_\e \sum_{\ell\approx \la^2}\sum_{k_1^2+k_1+k_2^2+k_2=\ell}\la^{\e}\left\|T_{k_1,k_2}(H_{k_1}f,H_{k_2}f)\right\|^2_{L^{2}_{x}}.
\end{aligned}
\end{equation}
In the second line above we used the fact that for fixed $\ell\approx \la^{2}$, the number of integer pairs $(k_1, k_2)$ satisfying $k_1^2+k_1+k_2^2+k_2=\ell$ is bounded above by $C_\e\la^{\e}$ for arbitrarily small $\e$.

By \eqref{a21},  the right side of \eqref{a22} is bounded by 
\begin{equation}\label{a23}
    \begin{aligned}
 \sum_{\ell\approx \la^2}\sum_{k_1^2+k_1+k_2^2+k_2=\ell}&\la^{\e}\left\|T_{k_1,k_2}(H_{k_1}f,H_{k_2}f)\right\|^2_{L^{2}_{x}}\\
 &\lesssim_\e  \sum_{\ell\approx \la^2}\sum_{k_1^2+k_1+k_2^2+k_2=\ell}\la^{\e}2^{-j}\|H_{k_1}f\|^2_2\|H_{k_2}f\|^2_2
 \\
  &\lesssim_\e \la^{\e}2^{-j}\|f\|^4_2.
\end{aligned}
\end{equation}

The proof of \eqref{a21} mostly follows from the same strategy as in the proof of Lemma~A.2 in \cite{huang2024curvature}, 
but we include the details here for the sake of  completeness.
Recall \eqref{m14} and \eqref{5.4} and note that for a given $\theta=2^m\theta_0$, $m\ge10$, we have for
each fixed $c_0>0$
\begin{equation}\label{5.16}
\tilde \rho_k A^{\theta_0}_\nu h
=\sum_{\tilde \mu \in (c_0\theta)\cdot {\mathbb Z}^{2}} 
\tilde \rho_k A^{c_0\theta}_{\tilde \mu}A^{\theta_0}_\nu h +O(\la^{-N}\|h\|_2).
\end{equation}
We are only considering $m\ge10$ due to the organization of the sum in the left side of \eqref{m14}.  As in 
\cite{BlairSoggeRefined}, we shall choose $c_0=2^{-m_0}<1$ to be small enough to ensure that we have the
separation needed to apply bilinear oscillatory integral estimates.

Keeping this in mind fix $m\ge10$ in the first sum in \eqref{m14}.  We then have for a given $c_0$ as above and
pairs of dyadic cubes $\tau^\theta_{\mu_1}, \tau^\theta_{\mu_2}$ with $\tau^\theta_{\mu_1}\sim \tau^\theta_{\mu_2}$
\begin{multline}\label{5.17}
\sum_{({\nu_1},\nu_2)\in \tau^\theta_{\mu_1} \times \tau^\theta_{\mu_2}} (\tilde \rho_{k_1} A^{\theta_0}_{\nu_1} h) \, (\tilde \rho_{k_2} A^{\theta_0}_{\nu_2} h)
\\
=\sum_{(\nu_1,\nu_2)\in \tau^\theta_{\mu_1} \times \tau^\theta_{\mu_2}}  \sum_{\substack{\tau^{c_0\theta}_{\tilde \mu_1}\cap \overline{\tau}^\theta_{\mu_1}
\ne \emptyset \\ \tau^{c_0\theta}_{\tilde \mu_2}\cap \overline{\tau}^\theta_{\mu_2}\ne \emptyset}}
(\tilde \rho_{k_1} A^{c_0\theta}_{\tilde \mu_1}A^{\theta_0}_{\nu_1} h) \, (\tilde \rho_{k_2} A^{c_0\theta}_{\tilde \mu_2}A^{\theta_0}_{\nu_2} h)
+O(\la^{-N} \|h\|_2^2),
\end{multline}
if $\overline{\tau}^\theta_{\mu_1}$ and $\overline{\tau}^\theta_{\mu_2}$ are cubes with the same centers but 11/10 times the side length
of $\tau^\theta_{\mu_1}$ and $\tau^\theta_{\mu_2}$, respectively, so that we have
$\text{dist }(\overline{\tau}^\theta_{\mu_1}, \overline{\tau}^\theta_{\mu_2})\ge \theta/2$ when $\tau^\theta_{\mu_1} \sim \tau^\theta_{\mu_2}$.
We obtain \eqref{5.17} from the fact that the product of the symbol of $A^{c_0\theta}_{\tilde \mu}$ and $A^{\theta_0}_\nu$
vanishes if $\tau^{c_0\theta}_{\tilde \mu}\cap \overline{\tau}^\theta_\mu = \emptyset$ and $\nu\in \tau^\theta_\mu$ since
$\theta=2^m\theta_0$ with $m\ge 10$.  Also note that we then have for fixed $c_0=2^{-m_0}$ small enough
\begin{equation}\label{5.18}
\text{dist }(\tau^{c_0\theta}_{\tilde \mu_1}, \tau^{c_0\theta}_{\tilde \mu_2})
\in [4^{-1}\theta, 4^2\theta], \, \, \text{if }  \, \tau_{\mu_1}^\theta\sim \tau^\theta_{\mu_2}, \, \,
\tau^{c_0\theta}_{\tilde \mu_1}\cap \overline{\tau}^\theta_{\mu_1} \ne \emptyset
\, \, \text{and } \, \tau^{c_0\theta}_{\tilde \mu_2}\cap \overline{\tau}^\theta_{\mu_2} \ne \emptyset.
\end{equation}
 Note also that if we fix $c_0$ then for our pair $\tau_{\mu_1}^\theta\sim \tau^\theta_{\mu_2}$ of $\theta$-cubes there are only $O(1)$
summands involving $\tilde \mu_1$ and $\tilde \mu_2$ in the right side of \eqref{5.17}.

Based on this we claim that we would have \eqref{a21} if we could prove the following key result, which is a consequence of Proposition~A.3 in \cite{huang2024curvature}.
\begin{proposition}\label{prop5.3}  Let $\theta=2^j$ with $2^{10}\theta_0\le 2^j\ll 1$.  Then we can fix
$c_0=2^{-m_0}$ small enough so that whenever
\begin{equation}\label{5.19} \text{dist }(\tau^{c_0\theta}_{\nu_1}, \tau^{c_0\theta}_{ \nu_2})\in [4^{-1}\theta, 4^2 \theta]
\end{equation}
one has the uniform bounds for each $\e>0$
\begin{equation}\label{5.20}
\int_{S^2} \bigl| (\tilde \rho_{k_1} A^{c_0\theta}_{\nu_1} h_1) \, (\tilde \rho_{k_2} A^{c_0\theta}_{\nu_2} h_2)\bigr|^2 \, dx
\le C_\e \la^{\e}2^{-j}\|h_1\|_{L^2(S^2)}^{2} \, 
\|h_2\|_{L^2(S^2)}^{2},
\end{equation}
\end{proposition}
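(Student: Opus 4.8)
\noindent\textbf{Plan for the proof of Proposition~\ref{prop5.3}.}
The plan is to reduce the bilinear bound \eqref{5.20} to Lee's bilinear oscillatory integral estimate~\cite{LeeBilinear}, in the form of Proposition~A.3 of \cite{huang2024curvature}, by inserting the kernel representation from Lemma~\ref{loclemma} and performing an anisotropic rescaling adapted to the $\theta$-tubes. Concretely, I would first use \eqref{k2} to write, for $i=1,2$,
\begin{equation}\nonumber
\tilde\rho_{k_i}A^{c_0\theta}_{\nu_i}h_i(x)
=k_i^{1/2}\int e^{ik_i\,d_g(x,y)}\,a_{\nu_i}(k_i;x,y)\,h_i(y)\,dy
+O\bigl(\la^{-N}\|h_i\|_{L^2(S^2)}\bigr),
\end{equation}
and discard the $O(\la^{-N})$ terms, which contribute acceptably to \eqref{5.20}. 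Next I would pass to a single coordinate chart, namely Fermi normal coordinates $x=(x_1,x_2)$ about the spatial geodesic $\overline{\gamma}_{\nu_1}$. By \eqref{4'} and \eqref{k5} the amplitude $a_{\nu_1}$ is supported where $|x_1|+|y_1|\lesssim\theta$, $d_g(x,y)\approx\delta$ and $x_2-y_2>0$; by \eqref{5.33}--\eqref{5.34} the phase splits as $d_g(x,y)=(x_2-y_2)+\tilde\varphi_1(x,y)$ with $\tilde\varphi_1=O(\theta^2)$ and with mixed transverse Hessian bounded away from zero, while $a_{\nu_1}$ satisfies the derivative bounds \eqref{k3}. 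In the same chart, by the hypothesis \eqref{5.19}, the geodesic $\overline{\gamma}_{\nu_2}$ is, to leading order, a line making angle $\approx\theta$ with $\overline{\gamma}_{\nu_1}$ and offset from it by $O(\theta)$, so $a_{\nu_2}$ is concentrated in a correspondingly tilted $\theta$-tube with analogous bounds.

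The second main step is the rescaling: set $x_1=\theta\,\tilde x_1$ and $y_1=\theta\,\tilde y_1$ in both operators at once. This normalizes the two tubes to unit width, turns the two amplitudes into symbols that are smooth of all orders uniformly in $\theta$ and $\nu_i$, and --- using the \emph{lower} bound $\dist(\tau^{c_0\theta}_{\nu_1},\tau^{c_0\theta}_{\nu_2})\gtrsim\theta$ in \eqref{5.19} --- makes the images of $\overline{\gamma}_{\nu_1}$ and $\overline{\gamma}_{\nu_2}$ quantitatively transverse. After peeling off the modulations associated with the linear parts $k_i(x_2-y_2)$ of the phases, which are $L^2$ isometries and hence harmless, both operators become oscillatory integral operators whose phases obey the curvature (Carleson--Sj\"olin) hypotheses with oscillation parameter $k_i\theta^2\approx\la\theta^2$ in the rescaled transverse variable, and the rescaled pair meets the transversality hypothesis as well. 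I would then invoke Proposition~A.3 of \cite{huang2024curvature} to obtain the corresponding $L^2\times L^2\to L^2$ bilinear bound at the rescaled scale; undoing the rescaling and collecting the Jacobians ($\theta$ from each $dy_1=\theta\,d\tilde y_1$) against the weights $k_i^{1/2}$ and the gain of the bilinear estimate then reproduces exactly the factor $2^{-j}=\theta$ in \eqref{5.20}. Heuristically, this is a gain of roughly $\la^{1/2}$ over the trivial bound $\|(\tilde\rho_{k_1}A^{c_0\theta}_{\nu_1}h_1)(\tilde\rho_{k_2}A^{c_0\theta}_{\nu_2}h_2)\|_{L^2(S^2)}\lesssim(\la\theta)^{1/2}\|h_1\|_2\|h_2\|_2$, and it is precisely what the bilinear estimate supplies; the $\la^\e$ loss in \eqref{5.20} is benign and is inherited from \cite{huang2024curvature}.

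The hardest part is the uniform verification --- over all admissible $\nu_1,\nu_2$, all $\theta=2^j$ with $2^{10}\theta_0\le 2^j\ll1$, and all directions of the cutoff $B$ in \eqref{a11} --- that after passing to the Fermi chart about $\overline{\gamma}_{\nu_1}$ and rescaling, the resulting pair of phases genuinely satisfies the transversality and non-degeneracy hypotheses needed to apply Lee's estimate. This is exactly where one must take the conic support of $B(x,\xi)$ small and the constants $\delta,\delta_0,\delta_1$ in \eqref{a9}, \eqref{a6} and $c_0=2^{-m_0}$ in \eqref{5.16} small: the smallness of $\delta,\delta_0,\delta_1$ confines everything to a region where $d_g$ is smooth with uniformly controlled derivatives, ensuring $\tilde\varphi_i=O(\theta^2)$ with mixed transverse Hessian uniformly non-degenerate, while the smallness of $c_0$ upgrades the cube separation \eqref{5.19} to the quantitative transversality required. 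All of this is carried out in the proof of Proposition~A.3 of \cite{huang2024curvature}, which in turn rests on the bilinear oscillatory integral estimates of Lee~\cite{LeeBilinear} and simplified variants of the arguments of \cite{BlairSoggeRefined}, \cite{blair2015refined} and \cite{SBLog}.
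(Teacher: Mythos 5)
Your plan follows essentially the same route as the paper: the paper's proof of Proposition~\ref{prop5.3} consists of reducing to Proposition~A.3 of \cite{huang2024curvature} (itself built on Lee's bilinear oscillatory integral estimates and the arguments of \cite{BlairSoggeRefined}), and your sketch of the kernel representation, Fermi coordinates, anisotropic rescaling and transversality is precisely the content of that cited argument. The one point the paper emphasizes that you gloss over is the case $k_1\neq k_2$, which it handles by shrinking $\delta_1$ (depending on $c_0$) so that $k_1,k_2=\la(1+O(\delta_1\delta))$ and treating $k_1,k_2$ as the frozen variables of \cite{huang2024curvature}; your attribution of $\delta_1$'s role solely to confining the support misses this, but since you keep the two frequencies distinct throughout, this is a matter of emphasis rather than a gap.
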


The proof of this proposition is based on the bilinear oscillatory integral estimates of Lee~\cite{LeeBilinear}.
If $k_1=k_2$, \eqref{5.20} follows from the results in \cite[Proposition~A.3]{huang2024curvature}. If $k_1\neq k_2$, by choosing $\delta_1$ in \eqref{a9} sufficiently small, which may depend on the constant $c_0$,  so that $k_1,  k_2=\la(1+O(\delta_1\delta))$,   one can follow the same arguments as in the proof of \cite[Proposition A.3]{huang2024curvature}
or \cite[(3-6)]{BlairSoggeRefined} to get \eqref{5.20}. The role of fixed constants $k_1, k_2$ here is the same as the frozen variables $y_n, z_n$ in \cite{huang2024curvature}.  See the proof of Proposition A.3 in \cite{huang2024curvature} for more details.

Now
 let us verify the above claim.
We first note that if 
$h_1=\sum_{\nu_1\in \tau^\theta_{\mu_1}} A^{\theta_0}_{\nu_1} h$
and % \, \, \text{and } \, \,
$h_2=\sum_{\nu_2\in \tau^\theta_{\mu_2}} A^{\theta_0}_{\nu_2} h,
$
then by  the almost orthogonality of the $A^{\theta_0}_\nu$ operators, 
$$\|h_1\|_2^2 \lesssim \sum_{\nu_1 \in \tau^\theta_{\mu_1}} \|A^{\theta_0}_{\nu_1} h\|_2^2
\quad \text{and } \, \, 
\|h_2\|_2^2 \lesssim \sum_{\nu_2 \in \tau^\theta_{\mu_2}} \|A^{\theta_0}_{\nu_2} h\|_2^2.
$$
Thus, \eqref{5.16}, \eqref{5.18}, \eqref{5.20} and Minkowski's inequality, we have 
\begin{multline}\label{5.21}
\bigl\| \sum_{(\mu_1,\mu_2): \tau^\theta_{\mu_1} \sim \tau^\theta_{\mu_2}}
\sum_{(\nu_1,\nu_2)\in \tau^\theta_{\mu_1} \times \tau^\theta_{\mu_2}} (\tilde \rho_{k_1} A^{\theta_0}_{\nu_1} h_1)
(\tilde \rho_{k_2} A^{\theta_0}_{\nu_2} h_2) \bigr\|_{L^{2}}
\\
\le \sum_{(\mu_1,\mu_2): \tau^\theta_{\mu_1} \sim \tau^\theta_{\mu_2}}
\bigl\|
\sum_{\substack{\tau^{c_0\theta}_{\tilde \mu_1}\cap \overline{\tau}^\theta_{\mu_1}
\ne \emptyset \\ \tau^{c_0\theta}_{\tilde \mu_2}\cap \overline{\tau}^\theta_{\mu_2}\ne \emptyset}}
(\tilde \rho_{k_1} A^{c_0\theta}_{\tilde \mu_1}(\sum_{\nu_1\in \tau^\theta_{\mu_1}}A_{\nu_1}^{\theta_0}h_1)) \cdot
(\tilde \rho_{k_2} A^{c_0\theta}_{\tilde \mu_2}(\sum_{\nu_2\in \tau^\theta_{\mu_2}}A_{\nu_2}^{\theta_0}h_2))\bigr\|_{L^{2}}
%\\
+O(\la^{-N}\|h_1\|_2\|h_2\|_2)
\\
\lesssim_\e \la^{\e}2^{-j/2} \sum_{(\mu_1,\mu_2): \tau^\theta_{\mu_1} \sim \tau^\theta_{\mu_2}}
\bigl(\sum_{\nu_1\in \tau^\theta_{\mu_1}}\|A_{\nu_1}^{\theta_0}h_1\|_2^2\bigr)^{1/2}
\bigl(\sum_{\nu_2\in \tau^\theta_{\mu_2}}\|A_{\nu_2}^{\theta_0}h_2\|_2^2\bigr)^{1/2} 
%\\
+O(\la^{-N}\|h_1\|_2\|h_2\|_2)
\\
\lesssim  \la^{\e}2^{-j/2}\left(\sum_\mu \sum_{\nu\in \tau^\theta_\mu}\|A_\nu^{\theta_0}h_1\|_2^2\right)^{\frac12}\left(\sum_\mu \sum_{\nu\in \tau^\theta_\mu}\|A_\nu^{\theta_0}h_2\|_2^2\right)^{\frac12}+O(\la^{-N}\|h_1\|_2\|h_2\|_2)
\\
\lesssim \la^{\e}2^{-j/2}\|h_1\|_2\|h_2\|_2 +O(\la^{-N}\|h_1\|_2\|h_2\|_2).
\end{multline}
In the above we used the fact that for each $\tau^\theta_\mu$ there are $O(1)$ cubes $\tau^{c_0\theta}_{\tilde \mu}$ with
$\tau^{c_0\theta}_{\tilde \mu}\cap \overline{\tau}^\theta_\mu\ne \emptyset$ and for each $\tau^\theta_{\mu_1}$ there are $O(1)$ $\tau^\theta_{\mu_2}$ with
$\tau^\theta_{\mu_1} \sim \tau^\theta_{\mu_2}$ and we also used \eqref{a.33}.
The proof of \eqref{a21} is complete.
\end{proof}

\noindent\textbf{Remark}:
For  Zoll surfacces, to prove \eqref{a3'}, let $\tilde P_k$ be defined as in \eqref{2pluss}. Then 
it suffices to show for $\tilde \rho_k=B\circ \rho_k$ with $\rho_k$ satisfying \eqref{a6}, we have
\begin{equation}\label{za13}
\|\sum_{k\in \mathbb{N}}\tilde\rho_k\tilde P_kf\|_{L^{\frac{14}{3}}_{t,x}(M\times [0,1])}\le C_\e \la^{\frac17+\e} \|f\|_{L^2(M)}, \,\,\,\forall \e>0.
\end{equation}

To prove this, one 
can define 
\begin{equation}\label{zolla}
\begin{aligned}
P_{k,\nu}=\tilde\rho_{k}A^{\theta_0}_{\nu} \tilde P_{k}f. 
\end{aligned}
\end{equation}
It is straightforward to check that Fourier support of $t$ variable is contained  in 
\begin{equation}\label{ztfourier}
    [(k+\frac{\alpha}4)^2-C_0, (k+\frac{\alpha}4)^2+C_0].
\end{equation}
Thus, by replacing the arguments that rely on $L^2$ orthogonality in the $t$ variable for $S^2$ with the {\em{almost}} $L^2$ orthogonality argument used in the proof of Lemma~\ref{mainL4},
one can repeat the arguments in this section to obtain \eqref{za13} for general Zoll surfaces.

\newsection{Sharpness of Theorem~\ref{main}}

To see that \eqref{Sn011} is sharp on any Zoll manifold, note that if we fix
 $\rho\in \mathcal{S}(\R)$ satisfying $\rho(0)=1$ and $\supp\hat\rho\subset (-\frac12, \frac12)$ as before, \eqref{Sn011} implies
\begin{equation}\label{Snsmoothe}
\|\rho(t)e^{-it\Delta_{g}}\eta(P/k)f\|_{L^{q}_{t,x}(M\times \R )}\le C_s k^{s} \|f\|_{L^2(M)}, \,\,\,\forall s>\mu(q),
\end{equation}
if $k\in\mathbb{N}^+$ and
\begin{equation}\label{3.2'}
\eta\in C^\infty_0((-1,1)) \quad
\text{with } \, \, \eta(t)=1, \, \, \, |t| \le 1/2.
\end{equation}
Now let us define $\chi_k$ to be the spectral projection operator for $\sqrt{-\Delta_g}$ onto the unit length interval
\begin{equation}\label{spe2}
    I_k=[k+\frac{\alpha-2}4,  k+\frac{\alpha+2}4].
\end{equation}
By \eqref{spe1}, the eigenvalues of $\sqrt{-\Delta_g}$ are clustered around a $A/k$ neighborhood of $k+\frac{\alpha}4$, thus it is straightforward to check that 
  the Fourier transform  of 
\begin{equation}\label{2plussa}
       \rho(t)e^{-it\Delta_{g}}\eta(P/k)\chi_k f .
    \end{equation}
in the $t$-variable is supported in 
\begin{equation}\label{tfouriera}
    [(k+\frac{\alpha}4)^2-C_0, (k+\frac{\alpha}4)^2+C_0],
\end{equation}
Thus, since these intervals are of bounded length, by Bernstein's inequality and \eqref{Sn011}, 
\begin{equation}\label{Snsmoothee}
\begin{aligned}
    \|\rho(t)e^{-it\Delta_{g}}\eta(P/k)\chi_kf\|_{L^\infty_tL^{q}_x(M\times \R )}&\le C\|\rho(t)e^{-it\Delta_{g}}\eta(P/k)\chi_kf\|_{L^{q}_{t,x}(M\times \R )}\\
    &\le C_s k^{s} \|f\|_{L^2(M)}, \,\,\,\forall  s>\mu(q).
\end{aligned}
\end{equation}
However, when $t=0$, for $k\ge C$ being large enough, we have
\begin{equation}\label{chi}
    \rho(t)e^{-it\Delta_{g}}\eta(P/k)\chi_k f= \eta(P/k)\chi_k f=\chi_k f,
\end{equation}
which yields 
\begin{equation}\label{chi1}
\|\chi_k f\|_{L^{q}_x(M)}\le C_s k^{s} \|f\|_{L^2(M)}, \,\,\,\forall s>\mu(q).
\end{equation}
As was shown in \cite{SFIO2}, if $\sigma(q)$ is defined as in \eqref{Sn}, the unit-band spectral estimate \eqref{chi1} fails for $ s<\sigma(q)$ on any
compact manifold, regardless of the geometry. Consequently, we have $\mu(q)\ge \sigma(q)$.

We shall also remark that in the special case of standard round sphere $S^n$, by letting $f=Q_\la$, the highest weight spherical harmonic with eigenvalue $\la$  for $ 2\le q\le \frac{2(d+1)}{d-1}$, and $f=Z_\la$, the zonal spherical function  for $q>\frac{2(d+1)}{d-1}$, we have 
\begin{equation}\label{sharp1}
    \|e^{-it\Delta_{g}}f\|_{L^q_{t,x}(I\times M)}=\|e^{it\la^2}f\|_{L^q_{t,x}(I\times M)}\approx \|f\|_{L^q_{x}(M)}\approx \la^{\sigma(q)}\|f\|_{L^2(M)}.
\end{equation}
This implies that \eqref{Sn011} can not hold for $s<\sigma(q)$. 
See e.g.,  \cite{blair2022improved} for more detailed calculations related to the $L^q$ norms of spherical harmonics.

It remains to show $\mu(q)\ge \frac d2-\frac{d+2}q$ if \eqref{Sn011} holds.
To see this,  fix $x_0\in M$, let $f_\la(x)=\la^{-d/2}\sum_j\beta(\la_j/\la)e_j(x)\overline{e_j(x_0)}$ for $\beta$ as in \eqref{00.11}, which is $\la^{-d/2} K(x,x_0)$ if $K(x,y)$ denotes the kernel of the multiplier operator $\beta(P/\la)$. For this choice of $f_\la$, we have 
\begin{equation}\label{sharp2}
\begin{aligned}
    \big|e^{-it\Delta_{g}}f_\la (x_0)\big|&=\la^{-d/2}\big|\sum_j\beta(\la_j/\la)e^{it\la_j^2}|e_j(x_0)|^2\big| \\
    &\approx \la^{d/2}\,\,\text{if}\,\,\,|t|\le \delta\la^{-2},
    \end{aligned}
\end{equation}
for some fixed small $\delta$. This follows from the fact that by the pointwise Weyl formula (see e.g., \cite{SoggeHangzhou}) 
$$\big|\sum_j\beta(\la_j/\la)|e_j(x_0)|^2\big|\approx \la^{d},\,\,\text{for any} \,\,x_0\in M,
$$
as well as 
$$ \text{Re} (e^{it\la_j^2})\ge 1/2\,\,\text{if}\,\,\,|t|\le \delta\la^{-2} \,\,\text{and}\,\,\,\la_j\approx\la.
$$
As a result, for any $2\le q\le \infty$,
\begin{equation}\label{sharp3}
    \|e^{-it\Delta_{g}}f_\la\|_{L^q_{t}L^\infty_{x}( M\times [0,1])}\gtrsim \|e^{-it\Delta_{g}}f_\la\|_{L^q_{t}L^\infty_{x}( M\times [0,\delta\la^{-2}])}\approx \la^{\frac{d}{2}-\frac2q}.
\end{equation}
If we let  $\tilde\beta\in C_0^\infty(\R)$ which equals 1 in a neighborhood of (1/2,2), then $\tilde\beta(P/\la)f_\la=f_\la$, and we have the Bernstein inequality
\begin{equation}\label{Berstein}
   \|\tilde\beta(P/\la)\|_{L^p\rightarrow L^q}\lesssim \la^{\frac{d}{p}-\frac{d}{q}},\,\,\,1\le p\le q\le \infty,
\end{equation}
which implies
\begin{equation}\label{sharp4}
\|e^{-it\Delta_{g}}f_\la\|_{L^q_{t}L^\infty_{x}( M\times [0,1])}\lesssim \la^{\frac{d}{q}}  \|e^{-it\Delta_{g}}f_\la\|_{L^q_{t,x}( M\times [0,1])}.
\end{equation}
On the other hand,  by $L^2$ orthogonality and the pointwise Weyl formula above, we have 
 \begin{equation}\label{l2}
     \|f_\la\|_{L^2(M)}\approx 1.
 \end{equation}
If we combine  \eqref{sharp3} \eqref{sharp4} and \eqref{l2}, we have
\begin{equation}\label{sharp5}
\|e^{-it\Delta_{g}}f_\la\|_{L^q_{t,x}( M\times [0,1])}\gtrsim \la^{\frac d2-\frac{d+2}q}\|f_\la\|_{L^2(M)},
\end{equation}
which implies that $s\ge \frac d2-\frac{d+2}q$.

% In a graph, the results in \eqref{a3} and \eqref{main22} looks like

% \includegraphics[width=10cm, height=6.5cm]{a.png}

% Note that the tangent line to the red curve at the two end points are $y=2(\frac12-\frac1q)-\frac2q$ and $y=\frac12(\frac12-\frac1q)$, which are the natural lower bounds in the examples we discussed before, so the tangent lines are probably the bound we shall conjecture.

\bibliography{refs1}
\bibliographystyle{abbrv}

%\begin{thebibliography}{MA}
%\bibitem{burq} N. Burq: {\em D\'ecroissance de l\'energie locale
%de l'\'equation des ondes pour le probl\`me ext\'erieur et absence
%de r\'sonance au voisinage du r\'eel}, Acta Math. {\bf 180}
%(1998), 1--29.
% \bibitem{gilbarg} D. Gilbarg and N. Trudinger:
%{\em Elliptic partial differential equations of second order},
%Springer, Second Ed., Third Printing, 1998.
%\bibitem{KS} S. Klainerman and T. Sideris: {\em On almost global existence for
%nonrelativistic wave equations in 3d} Comm. Pure Appl. Math. {\bf
%49}, (1996), 307--321.
%
%\end{thebibliography}

\end{document}